\pgfplotsset{compat=1.13}
\DeclareFontFamily{U}{mathx}{\hyphenchar\font45}
\DeclareFontShape{U}{mathx}{m}{n}{
	<5> <6> <7> <8> <9> <10>
	<10.95> <12> <14.4> <17.28> <20.74> <24.88>
	mathx10
}{}
\DeclareSymbolFont{mathx}{U}{mathx}{m}{n}
\DeclareMathSymbol{\bigtimes}{1}{mathx}{"91}
\numberwithin{equation}{section}
\theoremstyle{nonumberbreak}
\DeclareMathOperator{\N}{\mathbb{N}}
\DeclareMathOperator{\R}{\mathbb{R}}
\DeclareMathOperator{\id}{\mathrm{d}\!}
\DeclareMathOperator{\eps}{\varepsilon}
\DeclareMathOperator{\diag}{diag}
\DeclareMathOperator{\rank}{rank}
\DeclareMathOperator{\epsM}{\eps_{\text{M}}}
\newcommand\numberthis{\addtocounter{equation}{1}\tag{\theequation}}
\title{\LARGE Rank Bounds for Approximating Gaussian Densities in the Tensor-Train Format}
\author[1]{\large Paul~B.~Rohrbach}
\affil[1]{Department of Applied Mathematics and Theoretical Physics, University of Cambridge, Wilberforce Road, Cambridge CB3 0WA, United Kingdom (\href{mailto:pbr28@cam.ac.uk}{pbr28@cam.ac.uk})}
\author[2]{Sergey~Dolgov}
\affil[2]{Department of Mathematical Sciences, University of Bath, Claverton Down, Bath BA2 7AY, United Kingdom (\href{mailto:S.Dolgov@bath.ac.uk}{S.Dolgov@bath.ac.uk})}
\author[3]{Lars~Grasedyck}
\affil[3]{Institut f\"ur Geometrie und Praktische Mathematik, RWTH Aachen, Templergraben 55, 52056 Aachen, Germany (\href{mailto:lgr@igpm.rwth-aachen.de}{lgr@igpm.rwth-aachen.de})}
\author[4,2]{Robert~Scheichl}
\affil[4]{Institute for Applied Mathematics, Heidelberg University, Im
  Neuenheimer Feld 205, 69120 Heidelberg, Germany
  (\href{mailto:R.Scheichl@uni-heidelberg.de}{R.Scheichl@uni-heidelberg.de})}
\date{\large \today\vspace{-2ex}}
\begin{document}
\maketitle

\begin{abstract}
    Low-rank tensor approximations have shown great potential for
      uncertainty quantification in high dimensions, for example, to
      build  surrogate models that can be used to speed up large-scale
      inference problems (Eigel et al., \emph{Inverse Problems} \textbf{34}, 2018; Dolgov
      et al., \emph{Statistics \& Computing} \textbf{30}, 2020). The feasibility and
      efficiency of such approaches depends critically on the rank
      that is necessary to represent or approximate the underlying
      distribution.  In this paper, a-priori rank bounds for
      approximations in the functional tensor-train representation for
      the case of Gaussian models are
      developed. It is shown that under suitable conditions on the
      precision matrix, the Gaussian density can be approximated to
      high accuracy without suffering from an exponential growth of
      complexity as the dimension increases. These results provide a
      rigorous justification of the suitability and the limitations of
      low-rank tensor methods in a simple but important model
      case. Numerical experiments confirm that the rank bounds capture
      the qualitative behavior of the rank structure when varying the
      parameters of the precision matrix and the accuracy of the
      approximation. Finally, the practical relevance of the theoretical
      results is demonstrated in the context of a Bayesian filtering problem.
\end{abstract}

\section{Introduction}
Inference problems for high-dimensional random variables appear commonly in scientific computing.
In the field of uncertainty quantification, for example, the behavior of a system of interest can be modeled by the pushforward of a random coefficient field by a physical model (often given by a partial differential equation), see e.g. \cite{lordetal2014}, or by the posterior distribution of the corresponding Bayesian inverse problem of estimating parameters based on measured (noisy) data \cite{stuart2010inverse}.
In typical applications, the underlying stochastic parameter domain is infinite dimensional and has to be discretized (e.g.\ by a truncated Karhunen-Loève expansion) with tens to thousands of stochastic parameters resulting in a complicated high-dimensional random variable that needs to be investigated.
A great deal of effort has been spent on developing numerical methods for such problems.

One specific example of a Bayesian inverse problem is filtering~\cite{J70},
where one aims to assimilate noisy time series data into a probabilistic dynamical state space model.
Under the assumptions of linear dynamics and Gaussian noise, the resulting Gaussian filtering distribution can be computed exactly via the Kalman filter. In the general nonlinear case, a Bayes-optimal filter
requires the solution of a high-dimensional Fokker-Planck equation to find the exact (non-Gaussian) filtering distribution, requiring efficient general-purpose numerical approximation techniques in high dimensions.
In moderate dimensions, direct piecewise polynomial approximation methods,
e.g., based on sparse grid approaches \cite{bungartzgriebel2004}, can be used to soften the exponential growth of the cost with respect to dimension.
However, these methods do not fundamentally escape the curse of dimensionality.

Sampling based methods, in particular Monte Carlo or Markov chain Monte Carlo methods, are in principle suitable for inference and uncertainty quantification (UQ) in very high dimensions \cite{liu2004,stuart2010inverse,lordetal2014}.
However, the slow convergence rates and the potentially long decorrelation times of the Markov chains for general, high-dimensional, nonlinear inverse problems can make these methods very expensive, in particular when the forward model is complicated and costly to evaluate. In the linear Gaussian case, direct samplers based on factorizations of the posterior covariance matrix are available \cite{rue2001, bardsley2012, aune2013, fox-norton2016} and  polynomial-accelerated Markov chains can be used \cite{fox-parker2017}.

For non-Gaussian distributions, several promising, recently developed approaches to improve the performance of UQ and inference techniques (either direct or sampling-based) for general nonlinear problems in high dimensions are based on low-rank tensor surrogates.
Originating from renormalization group procedures in computational chemistry \cite{white1992}, such surrogates can be used, e.g., to represent (or approximate) the high dimensional distribution in a parametric format, with a complexity that grows only polynomially with dimension and thus allows to break the curse of dimensionality. The low-rank tensor format can be seen as a generalization of the matrix singular value decomposition to higher dimensions.
There is no unique way to do this and there are a range of competing formats.
Here, we focus on the commonly used Tensor-Train decomposition \cite{oseledets2011tensor, oseledets2009breaking}.
For reviews on low-rank tensor formats see \cite{hackbuschtensor,grasedyck2013literature,khoromskij2015tensor}.

There are many ways to utilize low-rank parametrizations in UQ.
For example, they can be used directly to approximate the forward map \cite{khoromskij2011tensor,dolgov2015polynomial,ballani-grasedyck2015, eigel2017adaptive, dolgov2019hybrid} or the posterior distribution \cite{Eigel-bayes-2018} in variational approaches, or to speed up sampling from the exact distribution \cite{dolgov2018approximation,cui-dolgov2020}.
Their efficiency hinges on the size of the ranks that are necessary to represent or to approximate the underlying high dimensional function sufficiently accurately.
Low-rank tensor approaches appear to be working well in practice in the applications considered in the papers above. However, little is known about theoretical guarantees on the required ranks.
Exact rank bounds have only been shown for some specific function classes in other applications (not UQ-related) so far \cite{khoromskij2010dmrg,grasedyck2010polynomial,dk-qtt-tucker-2013,kva-anidiff-2013}. In particular, so far there exists no theory for probability density estimation via low-rank techniques.

In this paper, we present first results on a-priori bounds for the rank of tensor approximations in the case of high dimensional Gaussian random variables.
Due to their ubiquity in statistics as the limiting distribution of (sufficiently regular) averages of random variables, this is a canonical starting point for developing a theory of low-rank methods in UQ.
Our considerations are based on \emph{locality} of correlations between different variables, e.g.\ bandedness of the covariance matrix, which was also used to reduce the computational complexity of MCMC~\cite{morzfeld2017localization}.
When the covariance matrix of the Gaussian is diagonal, the corresponding density reduces to a product of one-dimensional functions which is clearly of rank $1$.
We show that we can preserve low-rank approximability when we are sufficiently close to this setting.
We quantify this by looking at the singular spectrum of the subdiagonal blocks of the precision matrix (inverse covariance matrix): we can approximate the densities with low rank if there are only few singular values (Theorem \ref{thm:lowrankapprox}) or the values are decaying fast (Theorem \ref{thm:expdecayapprox}).
In those cases, we can prove a poly-logarithmic/polynomial growth rate of the ranks with respect to the inverse approximation accuracy $1/\eps$ and with respect to the dimension $d$. This result breaks down as the rank of the subdiagonal block matrices increases or, correspondingly, as the decay rate of the singular values decreases.

While the range of direct applications of these theoretical results for Gaussian distributions is limited, the technical difficulties are already substantial and provide a natural first stepping stone for further analysis of the algorithms presented for example in \cite{Eigel-bayes-2018, dolgov2018approximation,cui-dolgov2020}. Nevertheless, our theoretical results can be used directly, for example to estimate ranks of TT approximations of the solution to the Fokker-Planck equation in Bayesian filtering. We demonstrate this in Section  \ref{sec:filtering_example} for a general nonlinear dynamical system. At each observation time $t_\ell$, in addition to solving the Fokker-Planck equation numerically, we also compute the linearised posterior covariance $C(t_\ell)$ cheaply via the Kalman filter. Via our new theoretical results, the singular values of the subdiagonal blocks of the precision matrix $C(t_\ell)^{-1}$ will provide rigorous bounds on the ranks necessary to approximate the corresponding Gaussian density function to a specified accuracy. Provided the dynamics is weakly nonlinear or the frequency of observations is high, we can expect that the TT ranks of the (non-Gaussian) Bayes-optimal filtering distribution of the original nonlinear dynamical system will be very similar. This conjecture is motivated by holomorphy of many commonly employed likelihood functions~\cite{schwab2020bayes} and the fact that polynomial correction functions (e.g. via a Taylor expansion) can be represented in the TT format with moderate TT ranks~\cite{khoromskij2010dmrg}.
A detailed analysis of all possible sub-Gaussian functions is however beyond the scope of this paper.

The paper is organized as follows: In Section \ref{sec:tt} we introduce the Tensor-Train format and some preliminary material. 
The main results are presented in Section \ref{sec:rankbounds}.
In Section \ref{sec:numerics}, we provide numerical examples that confirm qualitatively our theoretical results, and then illustrate in Section \ref{sec:filtering_example} how they can be used within a nonlinear Bayesian filtering problem.
Finally, we summarize our results in Section~\ref{sec:conclusions}.

\section{Low-Rank Tensor Decompositions}\label{sec:tt}
In this section, we give a brief overview of the relevant tensor decomposition methods. 
From an abstract point of view, the goal of these methods is to represent a high dimensional function
\[
	f: \R^d \to \R
\]
as well as its discrete evaluation $T = f(\widehat{Q}) \in \R^{n_1 \times \dots \times n_d}$ on a tensor grid
\[
	\widehat{Q} = \bigtimes_{i=1}^{d} \{\xi^{(i)}_1, \dots, \xi^{(i)}_{n_i}\}
\]
with $n_i \in \N$ grid points on each axis.
First, we look at the discrete case of the tensor $T$.
It is obvious that computing and storing all elements of a tensor $T = f(\widehat{Q})$ directly is prohibitively expensive for anything but very small dimensions.
In order to be able to work with $T$, we need an efficient representation whose complexity does not increase exponentially in its dimension.
In two dimensions, a low-rank representation and optimal approximations are easily computable using the singular value decomposition (SVD).
There is no clear way to generalize such a decomposition to higher dimensions and there are various formats that manage to transfer some of the properties of the SVD to higher dimensions \cite{hackbuschtensor, grasedyck2013literature}.
In this paper, we will use the Tensor-Train (TT) format \cite{oseledets2011tensor, oseledets2009breaking} and the corresponding functional Tensor-Train (FTT) format \cite{oseledets2013constructive,bigoni2016spectral,gorodetsky2019continuous} as the low parametric representations.
However, the results presented in this paper can be easily generalized to other subspace based tensor formats, e.g. the hierarchical Tucker format \cite{grasedyck2010hierarchical}.

In this section, we briefly define the format and state the properties needed for the following proofs.
For an introduction to the format and further details, see \cite{oseledets2011tensor} and the references in this section.
\begin{definition}[Discrete Tensor-Train Representation] \label{def:tttensor}
	Let $r_0, \dots, r_{d} \in \N$, and $T \in \R^{n_1 \times \cdots \times n_d}$ be a $d$-dimensional tensor. 
	The tuple of $3$-tensors
	\[
		(G_1, \dots, G_d) \;\; \text{ with } \;\; G_j \in \R^{r_{j-1} \times n_j \times r_j}
		\;\; \text{ where } \;\; r_0 = r_d = 1
	\]
	is called a Tensor-Train (TT) representation of $T$ if
	\begin{equation}\label{eqn:evaluatingTT}
		T(i_1, \dots, i_d) = G_1(i_1) \cdots G_d(i_d)
			\;\; \text{where} \;\; G_j(i_j) := \left[ G_j(l, i_j, r) \right]_{l,r=1}^{r_{j-1},r_j}.	
	\end{equation}
\end{definition}
The tensor $T$ is represented or approximated by a sequence of three-dimensional tensors $G_j$ called the \textit{TT cores} (see Figure \ref{Img:TTFormat}).
We call $r = (r_0, \dots, r_d)$ the TT ranks of the decomposition.
For $n \sim n_j$ and bounded ranks $r \sim r_j$, the complexity of a TT tensor grows of the order $\mathcal{O}(dnr^2)$.
Each entry $T(i_1, \dots, i_d)$ is computed by a product of $d$ matrices which are given by the $i_j$-th slice of the $3$ dimensional TT core $G_j(i_j)$, see Equation (\ref{eqn:evaluatingTT}).
Since $r_0=1$, the first slice $G_1(i_1) \in \R^{r_1}$ is a one-dimensional vector and thus the evaluation of an entry of $T$ reduces to the computation of $d$ matrix-vector products which can be computed in $\mathcal{O}(dr^2)$ operations.
In physics literature, this format is also known as the Matrix Product States (MPS) representations of the tensor $T$ \cite{schollwock2005density}.
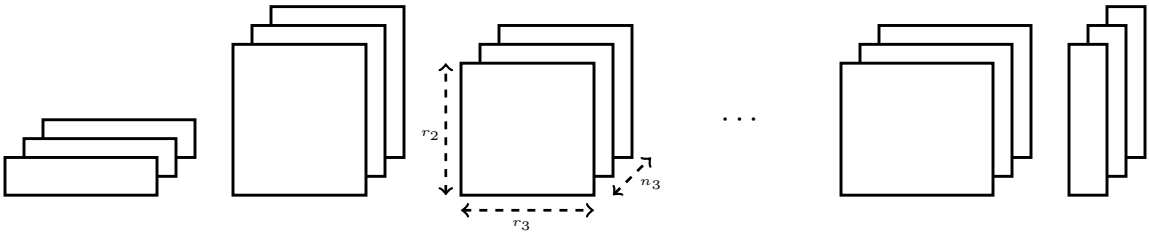
\begin{figure}[ht!]
	\centering


\begin{tikzpicture}
	\begin{scope}
		\draw[fill=white, line width=1.1] (0.5, 0.5) rectangle (2.5, 1);
 		\draw[fill=white, line width=1.1] (0.25, 0.25) rectangle (2.25, 0.75);
		\draw[fill=white, line width=1.1] (0, 0) rectangle (2, 0.5);
	\end{scope}
	
	\begin{scope}[shift={(3, 0)}]
		\draw[fill=white, line width=1.1] (0.5, 0.5) rectangle (2.25, 2.5);
 		\draw[fill=white, line width=1.1] (0.25, 0.25) rectangle (2, 2.25);
		\draw[fill=white, line width=1.1] (0, 0) rectangle (1.75, 2);
	\end{scope}
	
	\begin{scope}[shift={(6, 0)}]
		\draw[fill=white,line width=1.1] (0.5, 0.5) rectangle (2.25, 2.25);
 		\draw[fill=white, line width=1.1] (0.25, 0.25) rectangle (2, 2);
		\draw[fill=white, line width=1.1] (0, 0) rectangle (1.75, 1.75);
		
		\draw[dashed, <->, line width=1.02] (2, 0) -- (2.5, 0.5);
		\draw (2.5, 0.15) node {\tiny $n_3$};
		
		\draw[dashed, <->, line width=1.02] (0, -0.2) -- (1.75, -0.2);
		\draw (0.8, -0.4) node {\tiny $r_3$};
		
		\draw[dashed, <->, line width=1.02] (-0.2, 0) -- (-0.2, 1.75);
		\draw (-0.4, 0.8) node {\tiny $r_2$};
	\end{scope}
	
	\begin{scope}[shift={(9, 0)}]
		\draw (0.7, 1) node {\large $\cdots$};
	\end{scope}
	
	\begin{scope}[shift={(11, 0)}]
		\draw[fill=white,line width=1.1] (0.5, 0.5) rectangle (2.5, 2.25);
 		\draw[fill=white, line width=1.1] (0.25, 0.25) rectangle (2.25, 2);
		\draw[fill=white, line width=1.1] (0, 0) rectangle (2, 1.75);
	\end{scope}
	
	\begin{scope}[shift={(14, 0)}]
		\draw[fill=white,line width=1.1] (0.5, 0.5) rectangle (1, 2.5);
 		\draw[fill=white, line width=1.1] (0.25, 0.25) rectangle (0.75, 2.25);
		\draw[fill=white, line width=1.1] (0, 0) rectangle (0.5, 2);
	\end{scope}

\end{tikzpicture}

	\caption{Visualization of the cores of a TT tensor.}\label{Img:TTFormat}
\end{figure}

The Tensor-Train rank structure of $T$ is determined by the properties of the linear subspaces spanned by the matricizations of the tensor.
\begin{definition}[Matricization]\label{def:matricisation}
	Let $T \in \R^{n_1 \times \dots \times n_d}$ and $l \in \{1, \dots, d-1\}$. We call the matrix that results from reshaping $T$ to
	\begin{equation}
	T^{(l)}(i_1, \dots, i_l; i_{l+1}, \dots, i_d) \in \R^{n_1 \cdots n_l \times n_{l+1} \dots n_d}
	\end{equation}
	the $l$-matricization of $T$.
\end{definition}
The minimal TT ranks needed to exactly represent a tensor in the TT format are given by the matrix ranks of these matricizations.
\begin{theorem}\label{thm:ttranks}
	Let $T \in \R^{n_1 \times \dots \times n_d}$. For any exact TT decomposition $(G_1, \dots, G_d)$ of $T$
	\begin{equation}
		T(i_1, \dots, i_d) = G(i_1) \cdots G(i_d),
	\end{equation}
	the $l$-th TT rank $r_l$ fulfills 
	\begin{equation} \label{eqn:lowerrankbound}
		r_l \geq \rank T^{(l)}.
	\end{equation}
	Furthermore, there exists a TT decomposition of $T$ with ranks $r_l = \rank T^{(l)}$.
\end{theorem}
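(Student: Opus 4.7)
The theorem splits naturally into two claims --- a lower bound $r_l \geq \rank T^{(l)}$ valid for any exact TT decomposition, and the existence of a decomposition achieving equality --- and I would prove them separately.

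For the lower bound, I would observe that any TT decomposition induces, at each cut position $l$, a matrix factorization of the matricization $T^{(l)}$. Collecting the first $l$ core evaluations as $L(i_1,\ldots,i_l;\alpha_l) := [G_1(i_1)\cdots G_l(i_l)]_{\alpha_l}\in\R^{n_1\cdots n_l\times r_l}$ and the remaining ones as $R(\alpha_l;i_{l+1},\ldots,i_d) := [G_{l+1}(i_{l+1})\cdots G_d(i_d)]_{\alpha_l}\in\R^{r_l\times n_{l+1}\cdots n_d}$, formula~\eqref{eqn:evaluatingTT} becomes $T^{(l)} = LR$. Since the inner dimension is $r_l$, this immediately yields $\rank T^{(l)}\le r_l$.

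For the existence part, I would give a constructive proof via the \emph{TT-SVD} algorithm: compute the thin SVD of the $1$-matricization, $T^{(1)}=U_1\Sigma_1V_1^\top$ with $U_1\in\R^{n_1\times r_1}$ and $r_1:=\rank T^{(1)}$, assign $U_1$ (reshaped to size $1\times n_1\times r_1$) to the first core $G_1$, reshape the remainder $\Sigma_1 V_1^\top$ into a $(d-1)$-tensor $W\in\R^{r_1\times n_2\times\cdots\times n_d}$, and recurse: merge the leading index $r_1$ with $n_2$, form the corresponding $r_1 n_2\times n_3\cdots n_d$ matricization, SVD it, and so on. After $d-1$ such steps the algorithm terminates and the cores $G_1,\ldots,G_d$ form an exact TT decomposition of $T$ by construction.

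The main obstacle --- and the only non-obvious step --- is to verify that the rank produced by the $l$-th SVD coincides with $\rank T^{(l)}$. For this I would establish, by induction on $l$, the rank-preservation identity
\[
	T^{(l)} \;=\; (U_1\otimes I_{n_2\cdots n_l})\,\widetilde W^{(l-1)},
\]
where $\widetilde W^{(l-1)}$ denotes the $(l-1)$-matricization of $W$ with the internal index $\alpha_1$ grouped among the row modes. Since $U_1$ has orthonormal (hence linearly independent) columns, the Kronecker factor $U_1\otimes I_{n_2\cdots n_l}$ has full column rank, so $\rank T^{(l)} = \rank \widetilde W^{(l-1)}$. Iterating this identity along the recursion shows that at stage $l$ the matrix being SVD-ed has rank exactly $\rank T^{(l)}$, yielding the desired TT rank. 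Combined with the lower bound, this characterizes the matricization ranks as the minimal TT ranks.
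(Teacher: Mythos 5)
Your argument is correct and follows the same route as the paper. The lower-bound half is identical: you package the TT cores into the left factor $L$ and right factor $R$ of $T^{(l)}=LR$ (the paper's $G_{\le l},\,G_{>l}$), and read off $\rank T^{(l)}\le r_l$ from the inner dimension. For the attainability half the paper simply cites Theorem~2.1 of Oseledets (2011), whereas you supply the missing rank-attainment argument yourself. Your key observation---that $T^{(l)}$ factors through a full-column-rank Kronecker factor times the reshaped remainder, so the matrix SVD-ed at stage $l$ of TT-SVD has rank exactly $\rank T^{(l)}$---is precisely the content of the cited theorem, and iterating that identity along the recursion (replacing $U_1$ at stage $l$ by the accumulated left-orthogonal factor built from $U_1,\ldots,U_{l-1}$) closes the induction cleanly. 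So this is the same proof, just made self-contained.
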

\begin{proof}
	Let $(G_1, \dots, G_d)$ be a TT decomposition of $T$. We define
	\[
		G_{\leq l}(i_1, \dots, i_l) = G_1(i_1) \cdots G_{l}(i_l) \in \R^{1 \times r_l}, \;\;
		G_{>l}(i_{l+1}, \dots, i_d) = G_{l+1}(i_{l+1}) \cdots G_d(i_d) \in \R^{r_l \times 1}.
	\]
	Then $T(i_1, \dots, i_d) = G_{\leq l}(i_1, \dots, i_{l}) \cdot G_{> l}(i_{l+1}, \dots, i_d)$ is a skeleton decomposition of $T^{(l)}$, implying $r_l \geq \rank T^{(l)}$.
	It is shown in \cite[Theorem 2.1]{oseledets2011tensor} that the TT-SVD algorithm explained below constructs a TT decomposition where the rank bound (\ref{eqn:lowerrankbound}) is achieved with equality.
\end{proof}
Given the full tensor $T$, we can compute a TT decomposition by successively computing singular value decompositions.
We sketch the procedure: we start with the first matricization
\begin{equation}
	T^{(1)} = U_1 \Sigma_1 V_1^\intercal.
\end{equation}
Reshaping the first matrix of the decomposition $U_1 \in \R^{n_1 \times r_1}$ to the $3$-tensor $G_1 \in \R^{r_0 \times n_1 \times r_1}$ (recall that $r_0=1$) gives us the first TT core of the decomposition. The remaining matrix $(\Sigma_1 V_1^\intercal) \in \R^{r_1 \times n_2 \cdots  n_d}$ is reshaped to $\widehat{(\Sigma_1 V_1^\intercal)} \in \R^{r_1 n_2 \times n_3 \cdots  n_d}$.
Again, we compute the SVD
\begin{equation}
	\widehat{(\Sigma_1 V_1^\intercal)} = U_2 \Sigma_2 V_2^\intercal
\end{equation}
and reshape $U_2 \in \R^{r_1 n_1 \times r_2}$ to $G_2 \in \R^{r_1 \times n_2 \times r_2}$ to get the second TT core. 
This process continues with $(\Sigma_2 V_2^\intercal)$ and so forth.
As the final step, we set the last tensor core to be $G_d = {\Sigma_{d-1} V_{d-1}^\intercal} \in \R^{r_{d-1}\times n_d \times r_{d}}$.
The resulting cores $(G_1, \dots, G_d)$ form a TT decomposition of $T$ with minimal ranks \cite[Theorem 2.1]{oseledets2011tensor}.
Since this algorithm relies on subsequent singular value decompositions it is known as \textit{TT-SVD}.

Since random matrices have full rank almost surely \cite{feng2007rank}, it is an immediate consequence of Theorem \ref{thm:ttranks} that the exact representations of almost all random tensors $T$ in $\R^{n_1 \times \cdots \times n_d}$ have full rank in each matricization and their complexity scales exponentially in $d$.
In practice, we are interested in tensors that arise from systems with structure that could lead to bounded ranks.
Nevertheless, any noise in the entries would make an exact representation impossible even in a case where the underlying tensor is of low rank.
It is therefore important to be able to compute approximations and quantify the occurring error.
For the Tensor-Train format (and other subspace based formats like the hierarchical Tucker format \cite{grasedyck2010hierarchical}), best low-rank approximations are guaranteed to exist and the corresponding errors can be easily bounded by again looking at properties of the induced linear subspaces.
\begin{theorem} \label{thm:ttapproximation}
	Let $T \in \R^{n_1 \times \cdots \times n_d}$ be a tensor and $r = (r_0, \dots, r_{d})$ with $r_0 = r_d = 1$ be a rank bound. 
	Then, there exists a best approximation $\widetilde{T}$ of $T$ in the Frobenius norm with TT ranks bounded by $r$ whose error is bounded by
	\begin{equation}\label{eqn:ttapprox}
		\|T - \widetilde{T}\|_{F}  = \inf_{\rank(G) \leq r} \|T - G\|_F \leq \left(\sum_{l=1}^{d-1} \inf_{\rank A \leq r_l} \| T^{(l)} - A \|_{F}^2 \right)^{1/2}.
	\end{equation}
\end{theorem}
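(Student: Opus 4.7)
My plan is to establish the two assertions separately: first, existence of the best approximation via a compactness argument, and second, the quasi-optimality bound \eqref{eqn:ttapprox} via a constructive truncated TT-SVD together with an orthogonality argument that makes the errors add in squares.

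For existence, I would observe that the set
\[
\mathcal{M}_r := \bigl\{ S \in \R^{n_1 \times \cdots \times n_d} : \rank S^{(l)} \leq r_l \text{ for } l=1,\dots,d-1 \bigr\}
\]
contains every tensor admitting a TT decomposition with ranks bounded by $r$ (by Theorem~\ref{thm:ttranks}), and conversely every element of $\mathcal{M}_r$ admits such a decomposition (again by Theorem~\ref{thm:ttranks}, since one can take $r_l = \rank S^{(l)} \leq r_l$ and pad). Because matrix rank is lower semicontinuous, each condition $\rank S^{(l)} \leq r_l$ defines a closed set, and $\mathcal{M}_r$ is the intersection of finitely many closed sets, hence closed. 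Since $0 \in \mathcal{M}_r$, the infimum $\inf_{S \in \mathcal{M}_r} \|T - S\|_F \leq \|T\|_F$, so minimisation can be restricted to $\mathcal{M}_r \cap \overline{B_{\|T\|_F}(T)}$, which is compact in the finite-dimensional ambient space. Continuity of $\|T - \cdot\|_F$ then yields existence of a minimiser $\widetilde{T}$.

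For the quasi-optimality bound, I would run the truncated TT-SVD: at step $l=1,\dots,d-1$, let $P_l$ denote the orthogonal projector onto the best rank-$r_l$ column space of the $l$-th matricization of the current iterate, and set $T_l := P_l T_{l-1}$ with $T_0 := T$. By construction each $T_l$ lies in $\mathcal{M}_r$ after the last step, so its TT ranks are bounded by $r$. The crucial fact, which I expect to be the main technical obstacle, is that the errors telescope with squared additivity,
\[
\|T - T_{d-1}\|_F^2 = \sum_{l=1}^{d-1} \|T_{l-1} - T_l\|_F^2 ,
\]
which follows from the Pythagorean identity applied at each step because $T_l = P_l T_{l-1}$ is an orthogonal projection (under the appropriate reshaping) and the increments $T_{l-1} - T_l$ lie in mutually orthogonal subspaces. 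Specifically, all subsequent projectors $P_{l'}$ ($l' > l$) act on the already-reduced rank-$r_l$ column space, so they leave $T_{l-1} - T_l$ fixed, preserving orthogonality to later increments.

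Finally, I would bound each term $\|T_{l-1} - T_l\|_F^2$ by the optimal rank-$r_l$ matrix truncation error $\inf_{\rank A \leq r_l}\|T^{(l)} - A\|_F^2$. This uses that $T_{l-1}$ is obtained from $T$ by projections acting only on the ``left'' part of the matricization up to index $l-1$, so its $l$-th matricization is a left-multiplication of $T^{(l)}$ by an orthogonal projector; hence truncating to rank $r_l$ incurs at most the tail of the singular values of $T^{(l)}$ itself, i.e.\ $\sum_{i > r_l}\sigma_i(T^{(l)})^2 = \inf_{\rank A \leq r_l}\|T^{(l)} - A\|_F^2$ by the Eckart--Young theorem. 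Since $\widetilde{T}$ is at least as good as $T_{d-1}$, combining the displayed identity with these per-step Eckart--Young bounds yields \eqref{eqn:ttapprox}.
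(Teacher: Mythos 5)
The paper does not give its own proof of this theorem; it cites \cite[Theorem 2.2, Corollary 2.4]{oseledets2011tensor}. Your argument is precisely the standard Oseledets TT-SVD argument used in that reference: existence via closedness/compactness of the bounded-rank set, plus the sequential truncation with Pythagorean additivity of increments and the singular-value monotonicity under orthogonal projection to bound each per-step error by $\inf_{\rank A \le r_l}\|T^{(l)}-A\|_F$. So the approach is essentially the same and the proof is correct. One phrasing nit worth fixing: the assertion that later projectors ``leave $T_{l-1}-T_l$ fixed'' is not literally right; what makes the Pythagorean identity hold is that for every $l'>l$ the $l$-matricization of $T_{l'-1}-T_{l'}$ has column space contained in $\operatorname{range}(P_l)$ (because $G_1,\dots,G_l$ are unchanged in subsequent steps), whereas $T_{l-1}-T_l$ has $l$-matricization column space in $\operatorname{range}(P_l)^\perp$, and these two facts together yield the needed orthogonality of increments.
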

For a proof, see \cite[Theorem 2.2, Corollary 2.4]{oseledets2011tensor}.
One of the key features of the TT format is the right-hand side bound in Equation (\ref{eqn:ttapprox}).
This enables us to bound the tensor approximation error by only looking at approximations of its matricizations which is a much simpler linear algebra problem.
This observation will be the key ingredient that we use to prove the rank bounds in Section \ref{sec:rankbounds}.

In practice, computing exact best approximations is generally intractable, but by subsequently approximating the subspace of each matricization in the TT-SVD algorithm using a truncated singular value decomposition, we can compute an approximation that achieves the bound (\ref{eqn:ttapprox}) \cite[Theorem 2.2]{oseledets2011tensor}.
Since the the overall approximation error in the TT format is always larger than the error of a single matricization in the Frobenius norm, such an approximation is quasi-optimal with an additional factor of $\sqrt{d-1}$ \cite[Corollary 2.4]{oseledets2011tensor}.
The truncated TT-SVD procedure can be implemented efficiently for tensors already given in the TT format \cite[Algorithm 1]{oseledets2011tensor}.

To extend this format to a functional setting, we will follow the same recipe as outlined above, however applied on appropriate function spaces.
In each step, we need to find an equivalent of the operation we used in the discrete case.
For details of this construction, see \cite{bigoni2016spectral}.
We start by defining the functional Tensor-Train (FTT) decomposition of a function $f \in L^2(\R^d)$.
\begin{definition}
	Let $f \in L^2(\R^d)$ and
    \[
        \gamma_i: \N \times \R \times \N \to \R \;\; \text{ for } \;\; i=2, \dots, d-1,
	\]
	\[
        \gamma_1: \{1\} \times \R \times \N \to \R, \;\; \gamma_d: \N \times \R \times \{1\} \to \R
    \]
    be measurable functions.
	We call $(\gamma_1, \dots, \gamma_d)$ a functional Tensor-Train decomposition of $f$ if 
	\begin{equation}\label{eqn:truncfttdecomp}
		f(x_1, \dots, x_d) = \sum_{\alpha_1, \dots, \alpha_{d-1}= 1}^{\infty} \gamma_1(\alpha_0, x_1, \alpha_1) \cdots \gamma_d(\alpha_{d-1}, x_d, \alpha_d), \;\; \alpha_0 = \alpha_d = 1,
	\end{equation}
	in $L^2(\R^d)$.
\end{definition}
Instead of a tuple of $3$-tensors, we have decomposed the $d$-dimensional function $f$ into a tuple of $3$-dimensional functions (the FTT cores) with one real and two countable indices.
The ranks for the functional decomposition can be infinite.
The rank $r_i$ is finite if $\gamma_i(\cdot, \cdot, \alpha_i) \equiv \gamma_{i+1}(\alpha_i, \cdot, \cdot) \equiv 0$ for $\alpha_i > r_i$ (i.e.\ we only need sum over index $\alpha_i$ from $0$ to $r_i$ in Equation (\ref{eqn:truncfttdecomp})).

The properties of this format are again determined by the structure of the functional matricization
\begin{equation}
	f^{(l)}(x_1, \dots, x_l; x_{l+1}, \dots, x_d): \R^{l} \times \R^{d-l} \to \R.
\end{equation}
To analyze this, we need an analogue of the singular value decomposition on $L^2$ function spaces.
This is given by the Schmidt decomposition \cite[Theorem 4]{vsimvsa1992bestl}.
\begin{theorem}[Schmidt decomposition] \label{thm:schmidt}
	Let $Q_1 \subset \R^l, Q_2 \subset \R^{d-l}$ be open sets and $Q = Q_1 \times Q_2$. Let $\mu = \mu_{Q_1} \otimes \mu_{Q_2}$ be a $\sigma$-finite measure and $f \in L^2_\mu(Q)$.
    Then, there exist complete orthogonal systems
	\begin{equation}
		\{\gamma_i\}_{i=1}^{\infty} \subset L^2_{\mu_{Q_1}}(Q_1),
			 \;\; \{\phi_i\}_{i=1}^{\infty} \subset L^2_{\mu_{Q_2}}(Q_2)
	\end{equation}
	and a non-decreasing sequence $(\lambda_i)_{i=1}^{\infty} \subset \R$ such that $f$ can be expanded as the $L^2_\mu(Q)$ converging series
	\begin{equation}
		f = \sum_{i=1}^{\infty} \sqrt{\lambda_i} \gamma_i \phi_i.
	\end{equation}
	Furthermore, the partial sums $\sum_{i=1}^{k} \sqrt{\lambda_i} \gamma_i \phi_i$ are the orthogonal projections of $f$ and yield the best low-rank approximations
	\begin{equation}
		\| f - \sum_{i=1}^{k} \sqrt{\lambda_i} \gamma_i \phi_i \|_{L^2_{\mu}(Q)}
			= \sqrt{\sum_{i=k+1}^{\infty} \lambda_i} 
			= \inf_{\substack{g \in L^2_{\mu}(Q_1 \times Q_2)\\ \rank g = k}} \|f - g \|_{L^2_\mu(Q)}.
	\end{equation}
\end{theorem}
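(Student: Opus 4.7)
The plan is to identify $f$ with the kernel of an integral operator and derive the decomposition from the spectral theorem for compact self-adjoint operators on a Hilbert space. I would introduce
\[
T_f : L^2_{\mu_{Q_2}}(Q_2) \to L^2_{\mu_{Q_1}}(Q_1), \qquad (T_f \phi)(x_1) = \int_{Q_2} f(x_1, x_2)\, \phi(x_2) \, d\mu_{Q_2}(x_2).
\]
Because $\mu = \mu_{Q_1} \otimes \mu_{Q_2}$ is a product measure and $f \in L^2_\mu(Q)$, a Fubini argument together with Cauchy--Schwarz shows that $T_f$ is well defined and Hilbert--Schmidt with $\|T_f\|_{\mathrm{HS}}^2 = \|f\|_{L^2_\mu(Q)}^2$. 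In particular $T_f$ is compact, and the correspondence $g \mapsto T_g$ is an isometric isomorphism between $L^2_\mu(Q)$ and the Hilbert--Schmidt operators from $L^2_{\mu_{Q_2}}(Q_2)$ to $L^2_{\mu_{Q_1}}(Q_1)$.

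Next, I would apply the spectral theorem to the compact, positive, self-adjoint operator $T_f^* T_f$ on $L^2_{\mu_{Q_2}}(Q_2)$. This produces a complete orthonormal basis of eigenfunctions $\{\phi_i\}_{i=1}^\infty$ with non-negative eigenvalues $\lambda_i$, which can be ordered and tend to $0$. For each index with $\lambda_i > 0$ I would set $\gamma_i := \lambda_i^{-1/2} T_f \phi_i$. The identity
\[
\langle \gamma_i, \gamma_j \rangle = (\lambda_i \lambda_j)^{-1/2} \langle \phi_i, T_f^* T_f \phi_j \rangle = \delta_{ij}
\]
shows that these are orthonormal; they can be extended to a complete orthonormal system in $L^2_{\mu_{Q_1}}(Q_1)$ by adjoining an orthonormal basis of $\ker T_f^*$, paired with zero singular values to keep the notation uniform.

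To verify the expansion, I would form the partial kernels $f_k(x_1, x_2) := \sum_{i=1}^k \sqrt{\lambda_i}\, \gamma_i(x_1)\, \phi_i(x_2)$, whose corresponding operators $T_{f_k}$ are exactly the truncated spectral expansions of $T_f$. The standard identity $\|T_f - T_{f_k}\|_{\mathrm{HS}}^2 = \sum_{i>k} \lambda_i \to 0$, transferred through the $L^2_\mu$-to-Hilbert--Schmidt isometry, gives $\|f - f_k\|_{L^2_\mu}^2 = \sum_{i>k} \lambda_i$, so $f = \sum_i \sqrt{\lambda_i}\, \gamma_i \phi_i$ in $L^2_\mu(Q)$. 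The best low-rank approximation claim then follows from the Eckart--Young--Mirsky theorem for the Hilbert--Schmidt class applied to $T_f$.

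The main obstacle I anticipate is the careful matching between the \emph{rank of a kernel} $g \in L^2_\mu(Q)$ (the minimum number of separable terms in any $L^2_\mu$-representative) and the \emph{operator rank} of $T_g$. One needs to verify that these two notions of rank coincide, so that the operator-theoretic Eckart--Young--Mirsky bound over rank-$k$ Hilbert--Schmidt operators really yields the $L^2_\mu$ infimum over measurable rank-$k$ kernels. This is essentially a measure-theoretic exercise, but it requires some care regarding null sets and the completeness of the chosen bases; the product structure of $\mu$ is what makes it go through cleanly.
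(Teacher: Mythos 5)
The paper provides no proof of Theorem~\ref{thm:schmidt}; it only cites \cite[Theorem 4]{vsimvsa1992bestl}. Your proposal is the standard operator-theoretic route (identify $f$ with a Hilbert--Schmidt operator $T_f$, diagonalize $T_f^* T_f$, transfer the Eckart--Young--Mirsky bound back through the isometry $g \mapsto T_g$), which is also essentially what the cited reference does, and your argument is correct.

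Two minor points worth recording. First, the ``non-decreasing'' in the theorem statement is a typo for \emph{non-increasing}: the $\lambda_i$ must be ordered downward and tend to zero, since $\sum_i \lambda_i = \|f\|_{L^2_\mu(Q)}^2 < \infty$ is exactly what makes $T_f$ Hilbert--Schmidt. Second, the obstacle you flag about matching kernel rank to operator rank is the right one, and it does resolve under the product-measure hypothesis: a kernel $g(x_1,x_2) = \sum_{j=1}^k a_j(x_1)\, b_j(x_2)$ with $a_j \in L^2_{\mu_{Q_1}}(Q_1)$, $b_j \in L^2_{\mu_{Q_2}}(Q_2)$ induces an operator of rank $\le k$; conversely, a rank-$\le k$ Hilbert--Schmidt operator $S$ can be written as $S\phi = \sum_{j=1}^k u_j \langle v_j, \phi\rangle$, and if $S = T_g$ a Fubini argument shows $g(x_1,x_2) = \sum_{j=1}^k u_j(x_1)\, v_j(x_2)$ for $\mu$-a.e.\ $(x_1,x_2)$, so the two notions of rank coincide and the operator infimum is the $L^2_\mu$ infimum. (The passage from ``rank $\le k$'' to the ``rank $= k$'' in the paper's statement costs nothing, since a rank-$j$ kernel with $j<k$ can be perturbed to rank $k$ with arbitrarily small change in $L^2_\mu$.) Finally, you correctly note that the $\gamma_i$ coming from the nonzero singular values must be completed by an orthonormal basis of $\ker T_f^*$ to obtain the stated complete orthogonal system in $L^2_{\mu_{Q_1}}(Q_1)$.
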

Starting with any function $f \in L^2(\R^d)$, we can explicitly construct an FTT decomposition of $f$ by following the steps of the TT-SVD algorithm and iteratively applying the Schmidt decomposition to the functional matricizations. 
The details of this FTT-SVD algorithm are given in \cite[Section 4.1]{bigoni2016spectral}.

The remaining ingredient that we require for our results is the ability to bound the error of a finite-rank FTT approximation to a given function $f \in L^2(\R^d)$.
As before, this is determined by the ability to approximate the matricizations.
\begin{theorem}\label{thm:fttapproximation}
	Let $f \in L^2(\R^d)$ and $r = (r_0, \dots, r_d)$. Then, there exists an approximation $\widetilde{f}$ of $f$ with FTT-rank $r$ whose error is bounded by
	\begin{equation}
		\|\widetilde{f} - f\|_{L^2(\R^d)} \leq \left( \sum_{l=1}^{d-1} 
			\inf_{\substack{g \in L^2(Q_l^1 \times Q_l^2)\\ \rank g = r_l}} 
			\|f - g\|_{L^2(\R^d)}^2 \right)^{1/2},
	\end{equation}
	where
	\[
		Q_l^1 := \R^{l}\;\; \text{and} \;\; Q_l^2 := \R^{d-l}.
	\]
\end{theorem}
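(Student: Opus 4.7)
The plan is to follow the same recipe that gives Theorem \ref{thm:ttapproximation} in the discrete setting, using the Schmidt decomposition from Theorem \ref{thm:schmidt} as the functional analogue of the SVD. Concretely, for each $l \in \{1,\dots,d-1\}$, I would view any $g \in L^2(\R^d)$ as an element of $L^2(Q_l^1 \times Q_l^2)$, take its Schmidt decomposition, and truncate to the first $r_l$ terms. By Theorem \ref{thm:schmidt}, this defines an orthogonal projection $P_l$ of operator norm one, whose image consists of functions of rank at most $r_l$ along the $l$-th cut, and whose error on a given input equals the best rank-$r_l$ approximation error at level $l$. I then set $\widetilde{f} := P_{d-1} P_{d-2} \cdots P_1 f$ and claim that (i) $\widetilde{f}$ has FTT rank bounded by $r$, and (ii) its error satisfies the bound in the statement.

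For (i), I would mimic the inductive extraction of cores in the FTT-SVD algorithm of \cite{bigoni2016spectral}: after applying $P_1$, Schmidt decomposition along the first cut yields a representation $f_1(x_1,\dots,x_d) = \sum_{\alpha_1=1}^{r_1} \gamma_1(x_1,\alpha_1)\,\psi_1(\alpha_1,x_2,\dots,x_d)$. Applying $P_2$ to this, the $d{-}1$-variate residual functions $\psi_1(\alpha_1,\cdot)$ are Schmidt-decomposed along the new cut, producing the next core $\gamma_2(\alpha_1,x_2,\alpha_2)$ of size $r_1 \times r_2$, and so on. Each $P_l$ acts only on the right-hand block of the previous cut and so does not increase the ranks fixed at earlier levels; this is the step that makes the whole procedure yield an honest FTT decomposition with ranks exactly bounded by $r$.

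For (ii), the key is an orthogonality argument. Writing $P_0 := I$ and telescoping,
\begin{equation*}
f - \widetilde{f} = \sum_{l=1}^{d-1} \bigl(P_{l-1}\cdots P_1 f - P_l P_{l-1}\cdots P_1 f\bigr) = \sum_{l=1}^{d-1} (I - P_l)\,P_{l-1}\cdots P_1 f.
\end{equation*}
Because each $(I-P_l)$ maps into the orthogonal complement of the range of $P_l$, while $P_{l'}(I-P_l)\cdots$ lies in the range of $P_{l'}$ for $l' > l$, the summands are pairwise orthogonal in $L^2(\R^d)$, so Pythagoras gives $\|f-\widetilde{f}\|_{L^2}^2 = \sum_{l=1}^{d-1}\|(I-P_l) P_{l-1}\cdots P_1 f\|_{L^2}^2$. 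To bound each term by $\inf_{\operatorname{rank} g = r_l}\|f-g\|_{L^2}^2$, I would take any rank-$r_l$ approximant $g$ of $f$ at level $l$, observe that $P_{l-1}\cdots P_1 g$ still has rank at most $r_l$ at level $l$ (the earlier projections act only on the left block of the level-$l$ cut and hence cannot raise its separation rank), and use that $P_l$ is a best-approximation projection together with $\|P_{l-1}\cdots P_1\|_{\text{op}} \le 1$:
\begin{equation*}
\|(I-P_l) P_{l-1}\cdots P_1 f\|_{L^2} \le \|P_{l-1}\cdots P_1 f - P_{l-1}\cdots P_1 g\|_{L^2} \le \|f-g\|_{L^2}.
\end{equation*}
Taking the infimum over admissible $g$ and summing yields the claimed bound.

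The main obstacle I expect is the rank-preservation step in the error estimate, namely showing that $P_{l-1}\cdots P_1$ does not spoil rank-$r_l$ structure at level $l$. In the discrete case this is a straightforward matrix-rank argument because the left projections factor as operators acting only on the first $l$ indices; in the functional setting one has to argue the same fact for Schmidt truncations on $L^2$-spaces, using that the orthonormal left singular systems at cut $l' < l$ lie in $L^2(Q_{l'}^1) \subset L^2(Q_l^1)$ and therefore multiplying by them on the left preserves the separated form across the $l$-th cut. Once this is accepted, the orthogonality and operator-norm argument above is essentially book-keeping, and a reference to \cite[Theorem 2.2, Corollary 2.4]{oseledets2011tensor} and \cite[Section 4.1]{bigoni2016spectral} closes the proof.
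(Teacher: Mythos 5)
Your argument is correct and follows essentially the same route as the paper, which reduces Theorem~\ref{thm:fttapproximation} to the sequential FTT-SVD construction of \cite[Proposition~9]{bigoni2016spectral} together with the Schmidt decomposition (Theorem~\ref{thm:schmidt}). The only point to sharpen is that each $P_l$ must be understood as the \emph{fixed} linear orthogonal projection onto $W_l\otimes L^2(\R^{d-l})$, where $W_l$ is the span of the leading $r_l$ left Schmidt functions of $P_{l-1}\cdots P_1 f$ at cut~$l$ (not the nonlinear ``truncate any input'' operator your opening sentence suggests); with that reading, the nesting $W_l\subset W_{l-1}\otimes L^2(\R)$ is what delivers both the pairwise orthogonality of the telescoping terms and the fact that $P_{l-1}\cdots P_1$ acts only on the left block of the $l$-th cut, exactly as you use in part (ii).
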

This result can be shown by following the steps of \cite[Proposition 9]{bigoni2016spectral} and using Theorem \ref{thm:schmidt}.
Similar to the discrete case, we can construct an approximation that achieves this bound by truncating the representations of the matricizations of $f$ during the FTT-SVD algorithm to $r$.

Let $f \in L^2(\R^d) \cap C(\R^d)$ be a function with FTT rank $r = (r_0, \dots, r_{d})$.
If we compute the evaluation $f(\widehat{Q})$ of $f$ on the tensor grid $\widehat{Q}$, it can be written as
\begin{equation}
	f(\widehat{Q})(i_1, \dots, i_d) 
		= \sum_{\alpha_1, \dots, \alpha_{d-1}}^{r} \gamma_1(1, \xi^{(1)}_{i_1}, \alpha_1) 
		\gamma_2(\alpha_1, \xi^{(2)}_{i_2}, \alpha_2) \cdots \gamma_d(\alpha_{d-1}, \xi^{(d)}_{i_d}, 1).
\end{equation}
Therefore, the discrete TT ranks of $f(\widehat{Q})$ are always bounded by the functional TT ranks of $f$ independent of the grid $\widehat Q$ it is evaluated on.
Of course, the discrete rank might be lower in practice, especially for coarse grids.

\subsection{Computing FTT Approximations}
\label{sec:computingftt}

For the numerical tests in Section \ref{sec:numerics}, we need to be
able to compute approximations of functions in the FTT format. We restrict ourselves to smooth functions $f: Q \to \R$ on a bounded domain
\begin{equation}
    Q := [-a, a]^d.
\end{equation}
The reason for the restriction to a bounded domain is purely technical; it is a requirement for our numerical approximation scheme of $f$.
The influence of restricting a function $f \in L^2(\R^d)$ to $Q$ on the FTT ranks of approximations can be made arbitrarily small by choosing $a$ appropriately.
Therefore, being able to approximate functions on a bounded domain is sufficient for testing their rank structure.
For details, see the beginning of Section \ref{sec:numerics} where we investigate the dependence of the ranks of Gaussian densities on the size of the domain.

We use multivariate polynomial interpolation to approximate $f$.
For simplicity, we will assume the same number of $n$ nodes for each dimension.
Let $\widehat{Q} := \{\xi_1, \dots, \xi_n\}^d \in [-a, a]^{n^d}$ be a discrete tensor grid in $Q$ and $T := f(\widehat{Q})$.
We consider $T$ as the sample points of a multivariate Lagrange interpolation polynomial
\begin{equation}
	p_T(x) = \sum_{i_1, \dots, i_d = 1}^{n} T(i_1, \dots, i_d) l^{(1)}_{i_1}(x_1) \cdots l^{(d)}_{i_d}(x_d)
\end{equation}
with $l^{(i)}_j$ the corresponding $j$-th one dimensional Lagrange basis functions for the node basis $\{\xi_1, \dots, \xi_n\}$.
If
\begin{equation}
	T(i_1, \dots, i_d) = G_1(i_1) \cdots G_d(i_d)
\end{equation}
is a TT tensor of rank $r := (r_0, \dots r_{d})$, the FTT rank of the corresponding interpolation polynomial $p_T$ is bounded by $r$ since it can be written as
\begin{equation}\label{eqn:ttpoly}
	p_T(x) = \sum_{(\alpha_1, \dots, \alpha_{d-1})=1}^{r} \left( \sum_{i_1 = 1}^{n} G_1(i_1, \alpha_1) l^{(1)}_{i_1}(x_1) \right)
		\cdots \left( \sum_{i_d = 1}^{n} G_d(\alpha_{d-1}, i_d) l^{(d)}_{i_d}(x_d) \right).
\end{equation}
To avoid the instability problems generally associated with polynomial interpolation, we choose the nodes $\{\xi_1, \dots, \xi_n\}$ of the Gauss quadrature rule on $[-a, a]$.
This choice has the additional benefit that it enables us to approximate integrals of $f$, and in particular also its $L^2$ norm, with high accuracy.
Since we use the unscaled Lebesgue product measure on $Q$, the nodes are given by the transformed roots of the $n$-th Legendre polynomial.
The resulting interpolation polynomial $p_T$ in (\ref{eqn:ttpoly}) can be efficiently evaluated at any point $x \in \R^d$ using  \cite[Procedure 3]{bigoni2016spectral}.

Assume that we are given the exact evaluation tensors $T_n$ for growing node sizes $n$.
Then, the interpolation polynomial $p_n := p_{T_n}$ converges fast to the target function $f$.
\begin{theorem}\label{thm:polyapprox}
	For any $\nu \in \N$, there exists a constant $C = C(\nu)$ such that the Legendre interpolation polynomial $p_n$ converges with rate $\nu$ against $f$
	\begin{equation}
		\|f - p_n\|_{L^2(Q)} \leq C(\nu) n^{-\nu} |f|_{Q, \nu},
	\end{equation}
	where $|\cdot|_{Q, \nu}$ denotes the Sobolev semi-norm of the $\nu$-th derivative on $Q$.
\end{theorem}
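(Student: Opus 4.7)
The plan is to reduce the multivariate estimate to the classical univariate Gauss--Legendre interpolation bound and then transfer it to $d$ dimensions via a telescoping identity that exploits the tensor-product structure of $p_n$.

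\textbf{Step 1 (1D ingredients).} I would first recall the well-known one-dimensional Gauss--Legendre interpolation estimate (see, e.g., the standard spectral methods references of Canuto--Hussaini--Quarteroni--Zang or Bernardi--Maday): if $I_n$ denotes the 1D Lagrange interpolant on the $n$ Gauss--Legendre nodes of $[-a,a]$, then for every $\nu \in \N$ there is a constant $C(\nu)$ such that
\[
    \|g - I_n g\|_{L^2([-a,a])} \le C(\nu)\, n^{-\nu}\, |g|_{H^\nu([-a,a])}
\]
for all $g \in H^\nu([-a,a])$. I also need the $L^2$-stability bound $\|I_n g\|_{L^2([-a,a])} \le C\,\|g\|_{L^2([-a,a])}$, uniformly in $n$; this follows from the exactness of Gauss--Legendre quadrature on polynomials of degree $\le 2n-1$ combined with polynomial best-approximation.

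\textbf{Step 2 (Tensor-product structure and telescoping).} Let $I_n^{(i)}$ denote the univariate interpolation operator $I_n$ applied in the $i$-th variable with the other variables frozen. Because these operators commute, the full tensor-product interpolant factors as $p_n = I_n^{(1)}\cdots I_n^{(d)} f$, and the standard telescoping identity
\[
    f - p_n \;=\; \sum_{i=1}^{d} \bigl(I_n^{(1)}\cdots I_n^{(i-1)}\bigr)\bigl(\mathrm{id} - I_n^{(i)}\bigr) f
\]
holds (where the empty product is $\mathrm{id}$). This turns the question into $d$ single-direction error problems.

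\textbf{Step 3 (Fubini and assembly).} For each summand, Fubini together with the 1D bound from Step~1, applied in the direction $x_i$ with the remaining variables treated as parameters, gives
\[
    \bigl\|(\mathrm{id} - I_n^{(i)}) f\bigr\|_{L^2(Q)}
    \;\le\; C(\nu)\, n^{-\nu}\, \bigl\|\partial_{x_i}^{\nu} f\bigr\|_{L^2(Q)}.
\]
The prefactor operators $I_n^{(1)}\cdots I_n^{(i-1)}$ act on disjoint variables and are each $L^2(Q)$-stable by Fubini applied to the 1D stability bound of Step~1, so they contribute at most a multiplicative factor $C^{d-1}$. Summing over $i$ and bounding $\sum_{i=1}^d\|\partial_{x_i}^\nu f\|_{L^2(Q)}$ by the Sobolev semi-norm $|f|_{Q,\nu}$ yields the claimed estimate with a constant $C(\nu)$ that also depends (harmlessly) on the fixed dimension $d$.

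\textbf{Expected main obstacle.} The conceptual work is small; the delicate part is invoking exactly the right form of the 1D Gauss--Legendre estimate, since many textbook versions are stated in terms of $L^\infty$ or with Jackson-type weighted semi-norms rather than the plain $|\cdot|_{H^\nu}$. Once the correct 1D statement (valid uniformly in $n$ and in the parameters of the remaining variables, needed for the Fubini step) is cited, the rest is a routine telescoping that only tracks constants depending on $\nu$ and $d$.
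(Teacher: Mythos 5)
The paper does not give its own proof of this theorem: it simply cites Proposition~6 of Bigoni, Engsig-Karup and Marzouk \cite{bigoni2016spectral}. So I can only assess the internal soundness of your argument. Your overall strategy---a univariate Gauss--Legendre error estimate, the standard telescoping decomposition of the tensor-product error, and Fubini to transfer direction by direction---is exactly the right way to approach multivariate interpolation bounds, and the telescoping identity and the treatment of $(\mathrm{id}-I_n^{(i)})f$ via Fubini are fine.

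There is, however, a genuine gap in the $L^2$-stability claim in Step~1, which you then use in Step~3. The inequality $\|I_n g\|_{L^2}\le C\|g\|_{L^2}$ uniformly in $n$ is \emph{false} for Gauss--Legendre interpolation, even for continuous $g$, and it does not follow from quadrature exactness plus best approximation. Exactness of Gauss--Legendre quadrature on $P_{2n-1}$ gives $\|I_n g\|_{L^2}^2=\sum_k w_k\,g(\xi_k)^2$, i.e.\ the \emph{discrete} $L^2$ norm of $g$, which is not controlled by the continuous $L^2$ norm: a narrow continuous bump concentrated at a single Gauss node makes the ratio $\|I_n g\|_{L^2}/\|g\|_{L^2}$ arbitrarily large. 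What exactness and best approximation actually yield is a smoothness-weighted bound of the form $\|I_n g\|_{L^2}\le\|g\|_{L^2}+C\,n^{1/2-\nu}\|g\|_{H^\nu}$, which is not pure $L^2$-stability. This matters precisely in Step~3: the prefactor operators $I_n^{(1)}\cdots I_n^{(i-1)}$ are applied to the intermediate functions $(\mathrm{id}-I_n^{(i)})f$, and you cannot simply pass to $C^{d-1}\|(\mathrm{id}-I_n^{(i)})f\|_{L^2}$. The argument is repairable---the intermediate functions do inherit smoothness in the frozen directions from $f$, so one can invoke a weighted stability bound and absorb the extra regularity into the semi-norm $|f|_{Q,\nu}$---but the proof must track that regularity explicitly rather than invoke a uniform $L^2$ operator bound that does not hold.
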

For a proof of this theorem, see \cite[Proposition 6]{bigoni2016spectral}.
To evaluate integrals of $f$, we apply the corresponding Gaussian quadrature rule.
Let $w = (w_1, \dots, w_n)$ be the quadrature weights corresponding to the Gauss-Legendre nodes $\{\xi_1, \dots, \xi_n\}$.
Then, we have
\begin{equation}
	\int_Q f(x) \id x \approx 
    \int_{Q} p_T(x) \id x = \sum_{i_1, \dots, i_d = 1}^{n} (T \circ \mathcal{W})(i_1, \dots, i_d) \;\; \text{with} \;\; \mathcal{W} := \bigotimes_{i=1}^d w
\end{equation}
where $\circ$ denotes the Hadamard product of tensors.
Using this, we can easily determine the $L^2$ norm of $p_T$ by computing the Frobenius norm of the suitably scaled evaluation tensor
\begin{equation}
		\|p_T\|_{L^2(Q)} = \| T \circ \sqrt{\mathcal{W}} \|_F.
\end{equation}
The Hadamard product and the Frobenius norm are easily computable for tensors in the TT format \cite{oseledets2011tensor}.

Using interpolation, the problem of computing a
functional approximation is discretized to computing the tensor $T$.
Of course, the discrete tensor is still far too large to be evaluated
directly and a structure-adapted algorithm for its approximation has
to be employed.
Here, we use the TT-cross algorithm in \cite{oseledets2010tt}, namely its rank-adaptive version implemented in the \texttt{rect\_cross} class of the \texttt{ttpy} Python package \cite{oseledets2018ttpy}.
Using this, we can compute high-accuracy approximations of the
discrete  evaluation tensors of $f$.

There are two contributions to the error of $p_T$: the interpolation error and the error of the TT-cross approximation of the node tensor.
For the exact evaluation $f(\widehat{Q})$, the error of the polynomial $p_{f(\widehat{Q})}$ only depends on the grid $\widehat{Q}$ and, using Theorem \ref{thm:polyapprox}, we can make the relative interpolation error
\begin{equation}
	\delta_{\text{int}} := \|f - p_{f(\widehat{Q})}\|_{L^2(Q)} /  \|f\|_{L^2(Q)}
\end{equation}
arbitrary small.
The approximation of $T \approx f(\widehat{Q})$ introduces an additional error
\begin{equation}
	\delta_{\text{appr}} := \|p_{f(\widehat{Q})} - p_T\|_{L^2(Q)} /  \|f\|_{L^2(Q)} = \|(f(\widehat{Q}) - T) \circ \sqrt{\mathcal{W}} \|_F /  \|f\|_{L^2(Q)}.
\end{equation}
The overall error is bounded by
\begin{equation}
	\|f - p_T\|_{L^2(Q)}/ \|f\|_{L^2(Q)} \leq \delta_{\text{int}} + \delta_{\text{appr}} =: \delta.
\end{equation}
To test the FTT rank structure of $f$, we compute low-rank approximations by truncating the ranks of the TT tensor $T \circ \sqrt{\mathcal{W}}$ to relative accuracy $\eps$ giving us an approximation $\tilde{T}$ of $T$.
Consequently, we have
\begin{align}
	\|p_{\tilde{T}} - f\|_{L^2(Q)} &\leq \|p_{\tilde{T}} - p_{T}\|_{L^2(Q)} + \|p_T - f\|_{L^2(Q)} \\
		&\leq \|(T - \tilde{T}) \circ \sqrt{\mathcal{W}}\|_F + \|p_T - f\|_{L^2(Q)} \\
		&\leq (\delta + \eps) \|f\|_{L^2(Q)}.
\end{align}
For $\eps \gg \delta$, we can ignore the error contribution due to our approximation scheme $\delta$ and we thus obtain a functional approximation with relative error $\eps$ and the FTT ranks of $p_{\tilde{T}}$.

For a given computed approximation, we need to judge its relative accuracy $\delta$.
The TT-cross procedure generates an error estimate of the completed tensor.
To check that we have chosen a sufficiently dense grid, we draw random points on $Q$ from the target density and compute an importance sampling estimate of the relative interpolation error on $Q$.
If this error agrees with the error estimate of the TT-cross procedure, we accept the completed tensor and use it to investigate the rank structure up to an accuracy $\eps$ of one order of magnitude less than $\delta$, see Section \ref{sec:numerics}.

\section{Bounds on Low-Rank Approximability}
\label{sec:rankbounds}

In this section, we present our main results.
We define two conditions on the precision matrix of a Gaussian density under which we can bound the rank growth for approximations in the functional Tensor-Train format as a function of the dimension and the relative accuracy.
The setup is this: given a symmetric positive definite precision matrix $\Gamma \in \R^{d \times d}$, we consider the density of the Gaussian random variable $X \sim \mathcal{N}(0, \Gamma^{-1})$
\begin{equation}\label{eqn:def_f_gamma}
	f_\Gamma: \R^d \to \R, \;\; x \mapsto e^{-\frac{1}{2} x^\intercal \Gamma x}
\end{equation}
where we have dropped the normalization factor to simplify the notation (this does not influence the ranks).
If the precision matrix $\Gamma = \diag(\gamma_1, \dots, \gamma_d)$ is diagonal, $f_\Gamma$ immediately factorizes to
\begin{equation}\label{eqn:diagdensity}
    f_\Gamma(x_1, \dots, x_d) = \prod_{i=1}^{d} e^{-\frac{1}{2} \gamma_i x_i^2}
\end{equation}
which is a rank $1$ function.
On the other hand, we do not typically see low ranks in approximations when we choose the precision matrix randomly.
A reasonable expectation would be that we can approximate the density easily when the precision matrix is ``close'' to being diagonal.
In this paper, we present one possible notion of this for which we can prove explicit rank bounds and which is general enough to be useful in applications, see Section \ref{sec:filtering_example}.

We relate the approximability of $f_\Gamma$ to the structure of the subdiagonal blocks of the precision matrix $\Gamma$.
For $k = 1, \dots, d-1$, we can split $\Gamma$ into submatrices
\begin{equation}
   	\Gamma = \begin{bmatrix}
   				\Gamma_{1,k} & A_k^\intercal \\
   				A_k & \Gamma_{2,k}
  			 \end{bmatrix},
         \;\; A_k \in \R^{(d-k) \times k}.
\end{equation}
The subdiagonal block $A_k$ describes the interaction between the two blocks of variables in the $k$-th matricization of $f_\Gamma$.
If $\Gamma$ is diagonal, we have $A_k = 0$ and the blocks do not interact, hence the density factorizes.
When $A_k \neq 0$, the density does not factorize exactly any more, but if the structure of $A_k$ is sufficiently simple for each $k$, we can derive bounds on the ranks necessary to approximate $f_\Gamma$.
We look at two cases: In Theorem \ref{thm:lowrankapprox}, we assume that each $A_k$ is a low-rank matrix with uniformly bounded singular values and in Theorem \ref{thm:expdecayapprox}, we assume that the singular spectrum of $A_k$ decays at an exponential rate.
The proofs of both theorems are collected in Section \ref{sec:proofs}.

For the first case, we assume that there exists a (small) $l \in \N$ such that every subdiagonal block $A_k$ has $\rank A_k \leq l$.
We assume further that $A_k$ has singular values $\sigma_i^k$, $i=1, \dots, l$, that are uniformly bounded and set
\begin{equation}
	\sigma := \max_{k, i} \sigma_i^k.
\end{equation}
\begin{theorem}\label{thm:lowrankapprox}
	Let $\Gamma$ be a symmetric positive definite precision matrix with low-rank subdiagonal blocks as described above.
	For every $\eps > 0$ there exists an approximation $\hat{f}$ to the Gaussian density $f_\Gamma$ with
	\begin{equation}
		\|f_\Gamma - \hat{f} \|_{L^2(\R^d)} \leq \eps \|f_\Gamma\|_{L^2(\R^d)}	
	\end{equation}
	whose FTT-ranks $r_k$, $k=1, \dots, d-1$, are bounded by
	\begin{align}\label{eqn:rankestimate}
		r_k &\leq \left( \left( 1 + 7 \frac{\sigma}{\lambda_{\text{min}}} \right) \log \left( \frac{\sqrt{8} d}{\eps} \right) + \log \left( e^{3/2} \frac{l}{2} \right) \right)^{l} \\
			&\leq \left( \left( 1 + 7 \frac{\sigma}{\lambda_{\text{min}}} \right) \log \left( 7 l \frac{d}{\eps} \right)\right)^{l}
	\end{align}
    where $\lambda_{\text{min}}$ is the smallest eigenvalue of $\Gamma$.
\end{theorem}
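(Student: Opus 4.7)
My plan is to invoke Theorem~\ref{thm:fttapproximation} and thereby reduce the problem to bounding, for each splitting $k\in\{1,\dots,d-1\}$, the rank needed to approximate the $k$-th matricization of $f_\Gamma$ to a prescribed accuracy. The starting factorization $f_\Gamma(x)=e^{-\frac12 x_{\leq k}^\intercal\Gamma_{1,k}x_{\leq k}}\,e^{-\frac12 x_{>k}^\intercal\Gamma_{2,k}x_{>k}}\,e^{-x_{>k}^\intercal A_k x_{\leq k}}$ separates the two block-diagonal Gaussian pieces (which are rank-$1$ at the split) from the coupling factor, so all of the rank growth at the split is due to the off-diagonal exponential.

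To expose the low-rank structure of the coupling term, I would apply the rank-preserving whitening substitution $y=\Gamma_{1,k}^{1/2}x_{\leq k}$, $z=\Gamma_{2,k}^{1/2}x_{>k}$, which acts independently on the two sides of the split and therefore preserves both the rank and the relative $L^2$ error. The resulting density is proportional to $\exp(-(y^\intercal y+z^\intercal z)/2 - z^\intercal B y)$ with $B := \Gamma_{2,k}^{-1/2} A_k \Gamma_{1,k}^{-1/2}$ and $\rank B\le l$. The singular values $\tau_i$ of $B$ satisfy both $\tau_i<1$ (via the Schur-complement criterion for positive definiteness of $\Gamma$) and $\tau_i\le\sigma/\lambda_{\min}$ (combining $\sigma_{\max}(A_k)\le\sigma$ with the Cauchy interlacing bound $\lambda_{\min}(\Gamma_{i,k})\ge\lambda_{\min}$ for principal submatrices). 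Taking an SVD of $B$, extending the singular bases to orthogonal matrices, and rotating each block so that the first $l$ coordinates align with the singular directions decouples the density into a fully separable Gaussian ``outer'' factor (rank $1$ at the split) times the product $\prod_{i=1}^l h_i(y_i',z_i')$ of two-dimensional Gaussians $h_i(y,z)=\exp(-(y^2+z^2)/2-\tau_i yz)$ acting on disjoint coordinate pairs.

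Each $h_i$ has a Schmidt decomposition in $L^2(\R^2)$ whose singular values I would compute explicitly using Mehler's formula after an appropriate rescaling matching the cross-term to the Mehler kernel: the singular values decay geometrically as $\lambda_{i,n}=c_i\tilde\rho_i^n$ with $\tilde\rho_i=\tau_i/(1+\sqrt{1-\tau_i^2})\in(0,1)$. Because the $h_i$ act on disjoint coordinate pairs, the Schmidt decomposition of $\prod_i h_i$ is the tensor product of the individual decompositions; truncating every factor to its first $N$ modes yields a rank-$N^l$ approximation of the product with relative squared error at most $1-\prod_i(1-\tilde\rho_i^{2N})\le l\,\tilde\rho_{\max}^{2N}$ (via $1-\prod(1-a_i)\le\sum a_i$). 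Multiplying the outer rank-$1$ Gaussian factors back in and undoing the change of variables preserves both the rank bound and the relative error.

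Summing the squared per-split errors across the $d-1$ splittings in Theorem~\ref{thm:fttapproximation} and requiring the total to be at most $\eps^2\|f_\Gamma\|_{L^2}^2$ yields the condition $N\ge\log(\sqrt{l(d-1)}/\eps)/\log(1/\tilde\rho_{\max})$, and raising to the $l$-th power gives $r_k\le N^l$ in essentially the stated form. The main obstacle I anticipate is the final, essentially calculus-type, estimate that absorbs the prefactor $1/\log(1/\tilde\rho_{\max})$ into the clean linear expression $1+7\sigma/\lambda_{\min}$: because $\tilde\rho(\tau)\to 1$ as $\tau\to 1^-$, one must use both bounds on $\tau_{\max}$ simultaneously (the norm bound $\sigma/\lambda_{\min}$ and the strict bound $<1$) together with careful algebraic bookkeeping so that the residual factors---the $\log$-of-$l$, the $\sqrt{(d-1)/d}$, and the constants coming from the Mehler normalization---collapse into the stated additive $\log(e^{3/2}l/2)$ correction.
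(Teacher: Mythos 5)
Your approach is genuinely different from the paper's and is quite elegant. The paper fixes $k$, takes an SVD $A_k=U\Sigma V^\intercal$ and applies the orthogonal block rotation $\tilde x=(V^\intercal x_{\le k},U^\intercal x_{>k})$, which preserves $L^2$ norms exactly; it then truncates to a finite box $\Omega=[-a,a]^d$ and approximates each surviving coupling factor $\exp(-\sigma_i\tilde x_i\tilde x_{k+i})$ by Chebyshev interpolation on $[-a^2,a^2]$, controlling the cumulative error via stability lemmas. You instead whiten each block so that the two diagonal pieces become identities, exposing $B=\Gamma_{2,k}^{-1/2}A_k\Gamma_{1,k}^{-1/2}$ with $\|B\|<1$, and then use the \emph{exact} Schmidt (Mehler) decomposition of the elementary bivariate Gaussians on all of $\R^2$, truncating the geometrically decaying spectrum. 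No domain truncation and no interpolation error enter; the error budget is entirely spectral. If it goes through, your route is both cleaner and, in its dependence on $\sigma/\lambda_{\min}$, essentially \emph{tighter} than the paper's: the Mehler rate gives a prefactor scaling roughly like $\sqrt{\sigma/\lambda_{\min}}$ for large $\sigma/\lambda_{\min}$, versus the theorem's linear $1+7\sigma/\lambda_{\min}$.

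The gap is precisely the last step you flag, and it is bigger than ``careful bookkeeping.'' The two bounds you propose to use on $\tau_{\max}$ are not jointly sufficient. The strict bound $\tau<1$ carries no quantitative information, and the norm/interlacing bound $\tau\le\sigma/\lambda_{\min}=:s$ is useless once $s\ge 1$ and even fails the target inequality when $s<1$ but close to $1$: with $\tau=s=0.999$ one gets $\tilde\rho\approx0.956$, so $1/\log(1/\tilde\rho)\approx 22$, exceeding $1+7s\approx 8$. What you actually need — and what is true, though nontrivial — is the stronger bound
\[
\tau_{\max}\;\le\;\frac{s}{1+s},
\qquad s=\frac{\sigma}{\lambda_{\min}(\Gamma)}.
\]
One way to see this: if $u,v$ are top singular vectors of $B$ ($Bv=\tau u$, $\|u\|=\|v\|=1$), set $x=(\Gamma_{1,k}^{-1/2}v,\,-\Gamma_{2,k}^{-1/2}u)$; then $x^\intercal\Gamma x=2(1-\tau)$ and $\|x\|^2=v^\intercal\Gamma_{1,k}^{-1}v+u^\intercal\Gamma_{2,k}^{-1}u$, giving $\lambda_{\min}(\Gamma)\le 2(1-\tau)/\|x\|^2$, while
$\sigma=\sigma_1(A_k)\ge\bigl|\,(\Gamma_{1,k}^{-1/2}v)^\intercal A_k^\intercal(\Gamma_{2,k}^{-1/2}u)\,\bigr| /\bigl(\|\Gamma_{1,k}^{-1/2}v\|\,\|\Gamma_{2,k}^{-1/2}u\|\bigr)\ge 2\tau/\|x\|^2$
by AM--GM. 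Dividing yields $s\ge\tau/(1-\tau)$, i.e.\ $\tau\le s/(1+s)$. With this, one checks $1/\log(1/\tilde\rho_{\max})\le 1+7s$ across all $s>0$ (and in fact with much room to spare for large $s$). Without this lemma, the argument does not reach the stated exponent.
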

In the situation described above, the FTT ranks only grow poly-logarithmically in dimension $d$ and accuracy $1/\eps$.
By working with a tensor approximation of the density $f_\Gamma$, we can therefore avoid the curse of dimensionality.
There is however a critical exponential dependence on the number of singular values $l$ in the subdiagonal blocks.

In statistical applications, the structure of a Gaussian distribution is often more conveniently described by its covariance matrix (i.e.\ by the inverse of the precision matrix).
In this case, Theorem \ref{thm:lowrankapprox} is still applicable since the rank of the subdiagonal blocks are the same for $\Gamma^{-1}$ and $\Gamma$ \cite{Fiedler1993StructureRO}.
As an important special case, we can apply Theorem \ref{thm:lowrankapprox} whenever the covariance or the precision matrix is sparsely populated with entries near to the diagonal (e.g. a band matrix).

The strict low-rank requirement can be replaced by a strong decay rate for the singular values.
We assume that for all $k$-matricizations, the following exponential decay property for the singular values of the corresponding subdiagonal block holds
\begin{equation}\label{eqn:sigmadecayrate}
	\sigma^k_i \leq \alpha e^{-\theta i}.
\end{equation}
\begin{theorem}\label{thm:expdecayapprox}
	Let $\Gamma$ be a symmetric positive definite precision matrix with exponentially decaying singular values in each subdiagonal block as described above.
    For every $\eps > 0$ there exists an approximation $\hat{f}$ to the Gaussian density $f_\Gamma$ with
	\[
		\|f_\Gamma - \hat{f} \|_{L^2(\R^d)} \leq \eps \|f_\Gamma\|_{L^2(\R^d)}	
	\]
	whose FTT-ranks $r_k$, $k=1, \dots, d-1$, are bounded by
	\begin{align}
		r_k &\leq \exp\left(\frac{3 \alpha}{\lambda_{\min} \theta}\right) 
                    \left(3 \log \left( C \frac{d}{\eps} \right) \right)^{\frac{2}{\theta} \log \left( C \frac{d}{\eps} \right)} \label{eqn:thm2_eq1} \\
            &= \exp\left(\frac{3 \alpha}{\lambda_{\min} \theta}\right) 
                \left( C \frac{d}{\eps} \right)
                    ^{\frac{2}{\theta} \log \left( 3 \log \left( C \frac{d}{\eps}\right) \right)} \label{eqn:thm2_eq2}
	\end{align}
    with
    \[
        C := \max \left \{ \sqrt{8}, \frac{5}{\theta}, \frac{e^{\theta}}{1 + e^{\theta}}{\frac{4 \alpha}{ \lambda_{\text{min}}}} \right\}
    \]
    where $\lambda_{\min}$ is the smallest eigenvalue of $\Gamma$.
\end{theorem}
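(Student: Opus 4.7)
The plan is to mimic the proof of Theorem~\ref{thm:lowrankapprox} but replace the hard rank cutoff of $A_k$ with a spectral truncation whose level $l$ is tuned against the exponential tail. By Theorem~\ref{thm:fttapproximation} it suffices to bound, for each $k\in\{1,\dots,d{-}1\}$, the best rank-$r_k$ $L^2$ approximation of the $k$-th matricization of $f_\Gamma$ with relative error $\eps/\sqrt{d-1}$; this reduces the whole argument to a single matricization at a time.

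Fix $k$ and write the SVD of the subdiagonal block as $A_k=\sum_{i\geq 1}\sigma_i^k u_i v_i^\intercal$. The quadratic form decomposes as
\[
x^\intercal\Gamma x = x_1^\intercal\Gamma_{1,k}x_1 + x_2^\intercal\Gamma_{2,k}x_2 + 2\sum_{i\geq 1}\sigma_i^k\,(v_i^\intercal x_1)(u_i^\intercal x_2),
\]
so that $f_\Gamma$ factors as a product of a Gaussian prefactor in the variables $(x_1;x_2)$ and scalar exponentials $e^{-\sigma_i^k(v_i^\intercal x_1)(u_i^\intercal x_2)}$, one per singular value. Step one is to truncate this product at level $l$, discarding factors with $i>l$. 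Using standard Gaussian stability — i.e.\ bounding $\|f_\Gamma-f_{\tilde\Gamma}\|_{L^2}/\|f_\Gamma\|_{L^2}$ by $\|\Gamma-\tilde\Gamma\|/\lambda_{\min}$ up to multiplicative constants that remain benign so long as the perturbation is well below $\lambda_{\min}$ — and the tail estimate $\sum_{i>l}\sigma_i^k\leq \alpha e^{-\theta l}/(1-e^{-\theta})$, the resulting truncation error is $\lesssim\alpha e^{-\theta l}/(\lambda_{\min}(1-e^{-\theta}))$; demanding this to be $\le \eps/(2\sqrt{d-1})$ forces $l\gtrsim\tfrac1\theta\log(Cd/\eps)$ for the constant $C$ in the theorem statement.

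Step two is to approximate each of the remaining $l$ scalar exponentials by a rank-$p_i$ bivariate function, reusing the Taylor/Chebyshev estimate that drives the proof of Theorem~\ref{thm:lowrankapprox}. That estimate gives $p_i\lesssim(1+c\,\sigma_i^k/\lambda_{\min})\log(Cd/\eps)$ per factor, and since the rank of a product is multiplicative in the $(x_1;x_2)$ matricization, one obtains
\[
r_k\;\leq\;\prod_{i=1}^{l}p_i\;\leq\;\exp\!\Bigl(\tfrac{c}{\lambda_{\min}}\sum_{i=1}^l\sigma_i^k\Bigr)\,\bigl(c'\log(Cd/\eps)\bigr)^{l}.
\]
The geometric series $\sum_i\sigma_i^k\le \alpha/(1-e^{-\theta})\lesssim \alpha/\theta$ produces the prefactor $\exp(3\alpha/(\lambda_{\min}\theta))$, and choosing $l\approx\tfrac{2}{\theta}\log(Cd/\eps)$ (the factor $2$ absorbing the sum-of-squares in Theorem~\ref{thm:fttapproximation} and a constant swap inside the logarithm) yields exactly \eqref{eqn:thm2_eq1}; the equivalent form \eqref{eqn:thm2_eq2} comes from the identity $(3\log y)^{(2/\theta)\log y}=y^{(2/\theta)\log(3\log y)}$ with $y=Cd/\eps$.

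The main obstacle is the Gaussian stability step: one must show that the truncated precision matrix stays positive definite with smallest eigenvalue close to $\lambda_{\min}$, and simultaneously obtain a \emph{multiplicative} $L^2$ error bound whose leading constant does not pollute the final answer. It is this estimate that pins down the precise form of the constant $C$ in the theorem and forces the maximum over $\sqrt{8}$, $5/\theta$ and $\tfrac{e^\theta}{1+e^\theta}\tfrac{4\alpha}{\lambda_{\min}}$. Aggregating the per-matricization bounds through Theorem~\ref{thm:fttapproximation} then produces the claimed global FTT approximation $\hat f$; the remaining bookkeeping with the $(1+7\sigma/\lambda_{\min})$-style factor from Theorem~\ref{thm:lowrankapprox} — absorbing it into the exponential prefactor rather than raising it to power $l$ — is routine but fragile.
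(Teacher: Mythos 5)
Your high-level plan tracks the paper's proof closely: reduce to matricizations via Theorem~\ref{thm:fttapproximation}, rotate into the SVD basis of the subdiagonal block, discard singular values below a threshold $l\sim \tfrac1\theta\log(Cd/\eps)$, then interpolate the surviving $l$ factors by Chebyshev polynomials and multiply up the per-factor ranks, absorbing the $\prod_i(1+c\,\sigma_i^k/\lambda_{\min})$ piece into an $\exp(3\alpha/(\lambda_{\min}\theta))$ prefactor. All of that mirrors the paper, and you correctly identify that the one new ingredient relative to Theorem~\ref{thm:lowrankapprox} is controlling the truncation error.

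Where you diverge — and where there is a genuine gap — is in \emph{how} that truncation error is controlled. You propose a global ``Gaussian stability'' estimate on $\R^d$, bounding $\|f_\Gamma-f_{\tilde\Gamma}\|_{L^2}/\|f_\Gamma\|_{L^2}$ by $\|\Gamma-\tilde\Gamma\|/\lambda_{\min}$. As stated this is not correct: the operator norm of the discarded block is $\sigma_{l+1}^k$, but you immediately invoke the trace-norm tail $\sum_{i>l}\sigma_i^k$, which is what the estimate actually requires. More substantively, the relative $L^2$ distance between two unnormalized Gaussians over $\R^d$ is governed by determinant ratios ($\|f_\Gamma-f_{\tilde\Gamma}\|^2/\|f_\Gamma\|^2 = 1 + \sqrt{\det\Gamma/\det\tilde\Gamma} - 2\sqrt{\det\Gamma/\det((\Gamma+\tilde\Gamma)/2)}$), whose first-order terms cancel; what remains scales like $\|\Gamma^{-1/2}E\Gamma^{-1/2}\|_F$, and turning this into a clean non-asymptotic bound with explicit constants, while also keeping $\tilde\Gamma$ positive definite, is exactly the work you flag as ``the main obstacle'' and do not carry out. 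The paper sidesteps all of this by restricting to the bounded box $\Omega=[-a,a]^d$ \emph{first} and only then discarding the tail, at which point the truncation error is a pointwise ratio bound on $\Omega$: Lemma~\ref{lemma:entriestoapprox} computes $\|1-(f_{\lfloor l+1\rfloor}\cdots f_{\rank A})^{-1}\|_{L^\infty(\Omega)}=e^{\sum_{j>l}\sigma_j^k a^2}-1\le\tfrac32\sum_{j>l}\sigma_j^k a^2$. This ordering matters — on $\R^d$ the ratio $f_\Gamma/f_{\tilde\Gamma}$ is unbounded, so there is no pointwise shortcut. Your guess that the constant $C$ is ``pinned down'' by the stability estimate is also off: in the paper $C=\max\{\sqrt 8,\,5/\theta,\,\tfrac{e^\theta}{1+e^\theta}\tfrac{4\alpha}{\lambda_{\min}}\}$ collects the cutoff constant, the $l$-defining constant from Lemma~\ref{lemma:entriestoapprox} combined with $a^2$, and the interpolation-order bound; there is no eigenvalue-perturbation lemma. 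Filling your version of the truncation estimate in rigorously is plausibly doable, but it is not easier than the paper's route and it has not been done here, so the proposal has a real hole at the one step that distinguishes this theorem from Theorem~\ref{thm:lowrankapprox}.
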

Compared to (\ref{eqn:rankestimate}) from Theorem \ref{thm:lowrankapprox}, the exponent in (\ref{eqn:thm2_eq1}) now has a logarithmic dependence on the dimension $d$ and accuracy $1/\eps$.
The reformulation in (\ref{eqn:thm2_eq2}) shows that this cannot be
neglected: If we ignore the log-log term in the exponent, the equation
states a polynomial growth rate of the ranks in the dimension $d$ and the accuracy $1/\eps$ which is substantially worse than the poly-logarithmic rate in Theorem \ref{thm:lowrankapprox}.
This is consistent with our interpretation of the complexity associated with a Gaussian density.
Given a certain target accuracy, we can ignore all interactions between the two variable blocks of a matricization described by singular values below a certain threshold since their influence on the density is negligible.
As we increase the approximation accuracy, we need to refine our resolution of the dependencies between the blocks of the matricizations, which, according to Theorem \ref{thm:lowrankapprox}, results in a logarithmic growth of the rank per singular value we look at.
However, we also need to decrease the threshold, thereby increasing the number of singular values we need to account for.
We will see in Section \ref{sec:proofs} that the number of singular values we have to look at also grows logarithmically in $1/\eps$.
Combined, this indicates a rate of roughly
\[
   \log(1/\eps)^{\log(1/\eps)} = (1/\eps)^{\log \log(1/\eps)}
\]
(for $\eps \ll 1$) which is similar to the bound in the theorem.
Similarly to the dependence on the rank in the previous theorem, this bound depends very sensitively on the decay rate parameters $\alpha, \theta$ in (\ref{eqn:sigmadecayrate}).

In both theorems, we have described the FTT ranks by looking at various subdiagonal blocks which describe the correlation between variables of the two dimensions of the matricized function.
The structure of these blocks, as well as the ranks necessary for the Tensor-Train approximation, therefore depend on the ordering of the variables.
Furthermore, our results can be easily extended to the more general hierarchical Tucker (HT) format \cite{grasedyck2010hierarchical}.
The low-rank manifold of HT tensors is, similar to the TT format, described by the ranks of the linear subspaces induced by a generalized notion of matricizations.
As with the TT format (Theorem \ref{thm:ttapproximation}), approximability in the HT format can be established by approximating the corresponding matricizations.
By analyzing the generalized subdiagonal blocks that correspond to the matricizations in the HT format, one can proof equivalent results to Theorem \ref{thm:lowrankapprox}, \ref{thm:expdecayapprox}.

\subsection{Proofs of the Theorems}
\label{sec:proofs}

We have seen in (\ref{eqn:diagdensity}) that for diagonal precision matrices, the density is of rank $1$ since we can write it as product of one dimensional functions.
For a general precision matrix, such a multiplicative decomposition would contain additional exponential factors of $2$ variables.
The general idea behind the proofs of our theorems is to approximate these functions individually with low rank.
However, a naive approximation of each function would lead to a rank bound that grows exponentially in the number of off-diagonal terms in $\Gamma$ which generally grows at least linear in the dimension.
In the following, we use the properties of the TT format, in particular Theorem \ref{thm:fttapproximation}, as well as suitable coordinate transformations to reduce the number of functions that we need to approximate.

For a general precision matrix, the multiplicative decomposition of the density reads
\begin{equation}
	f_\Gamma(x) = \prod_{i,j = 1}^{d} e^{\frac{1}{2}\Gamma_{i,j} x_i x_j}.
\end{equation}
Each of the factors is a two dimensional function of the form
\[
	(x, y) \mapsto e^{-\gamma x y} \;\; \text{ for some } \gamma.
\]
Interpolating this exponential function with a one dimensional polynomial $p(t) = \sum_{i=1}^{r} c_i t^{i-1} \in \mathbb{P}_r$ of order $r$ (and therefore of polynomial rank $r-1$) yields an approximation
\[
	e^{-\gamma x y} \approx p(x \cdot y) = \sum_{i=1}^{r} c_i x^{i-1} \cdot y^{i-1}
\]
which is a sum of $r$ separated functions and therefore has FTT ranks bounded by $r$.
Before we can apply polynomial interpolation, we need to restrict the function $f_\Gamma$ to a finite domain.
This can be achieved by multiplying with the restriction function 
\begin{equation}
	\mathds{1}_\Omega(x) = \begin{cases}
								1, & \text{ if } x \in \Omega \\
								0, & \text{ otherwise}
							\end{cases}
\end{equation}
which has FTT rank $1$ if we restrict to a rectangular domain $\Omega = I_1 \times \dots \times I_d$.
The overall error is therefore determined by two terms, the error due to the cutoff and the error due to the inexact interpolation, which need to be balanced.
We could now apply a cutoff function and approximate each factor of the Gaussian $f_\Gamma$ with bounded rank, however, the exponential growth in the number of terms would make the resulting bound useless.
 
Using Theorem \ref{thm:fttapproximation}, we only need to prove that we can approximate the individual $k$-matricizations
\begin{equation}
	f_\Gamma^k: \R^k \times \R^{d-k} \to \R.
\end{equation}
For now, we fix $k \in \{1, \dots, d-1\}$.
The $k$-matricization of any Gaussian density $f_{\Gamma_0}$ whose precision matrix has the block structure
\begin{equation}
	\Gamma_0 := \begin{bmatrix}
			\Gamma_1 & 0 \\
			0        & \Gamma_2
		\end{bmatrix},
	\;\; \Gamma_1 \in \R^{k \times k}, \; \Gamma_2 \in \R^{(d-k) \times (d-k)}
\end{equation}
has rank $1$ as there is no correlation between the variable blocks $(x_1, \dots, x_{k})$ and $(x_{k+1}, \dots, x_d)$.
This motivates splitting the precision matrix in the diagonal blocks $\Gamma_1, \Gamma_2$, and the remaining subdiagonal block $A$
\begin{equation}\label{eqn:decomp_of_prec_matrix}
	\Gamma = \begin{bmatrix}
				\Gamma_1 & A^\intercal \\
				A & \Gamma_2
			 \end{bmatrix},
         \;\; A \in \R^{(d-k) \times k}.
\end{equation}
At this stage, we would need to approximate an exponential for every non-zero entry in $A$ which are in general substantially fewer factors then every non-zero off-diagonal entry of $\Gamma$.

However, we can reduce the number factors further by rotating the coordinate system for the first $k$ variables $\{x_1, \dots, x_k\}$ and the last $d-k$ variables $\{x_{k+1}, \dots, x_d\}$ respectively.
Let $A = U \Sigma V^\intercal$ be a singular value decomposition of $A$ and 
\begin{equation}
	\Gamma = \begin{bmatrix} V \\ & U \end{bmatrix} 
         \begin{bmatrix}\tilde\Gamma_1 & \Sigma^\intercal \\ \Sigma & \tilde\Gamma_2 \end{bmatrix}
         \begin{bmatrix}V^\intercal \\ & U^\intercal \end{bmatrix},
\end{equation}
where $\tilde\Gamma_1 = V^\intercal \Gamma_1 V$, $\tilde\Gamma_2 = U^\intercal \Gamma_2 U$.
We approximate the function $f_\Gamma$ from (\ref{eqn:def_f_gamma}) on the transformed coordinates
\begin{equation}\label{eqn:domaintransform}
	\tilde x_i = \left( \begin{bmatrix} V \\ & U \end{bmatrix}^\intercal x \right)_i
		= \begin{cases}
			(V^\intercal [x_1 \, \dots \, x_k]^\intercal)_i & \text{for } i \leq k, \\
			(U^\intercal [x_{k+1} \, \dots \, x_d]^\intercal)_i & \text{otherwise.}
		\end{cases}
\end{equation}
That is, we define the transformed precision matrix
\begin{equation}
	\tilde{\Gamma} := 
	\begin{bmatrix}\tilde\Gamma_1 & \Sigma^\intercal \\ \Sigma & \tilde\Gamma_2 \end{bmatrix}
\end{equation}
and approximate the corresponding unnormalized density
\begin{equation}
	f_{\tilde{\Gamma}}(\tilde x) = e^{-\frac{1}{2} \tilde x^\intercal \tilde \Gamma \tilde x} 
	     = e^{-\frac{1}{2} x^\intercal \Gamma x} =  f_{\Gamma}(x).
\end{equation}
As before, we decompose $\tilde \Gamma$ into a matrix containing the diagonal blocks $\tilde \Gamma_0$ and a matrix containing the off-diagonal blocks (odb) $\tilde \Gamma_{\text{odb}}$
\begin{equation}
	\tilde{\Gamma} = \begin{bmatrix}
						\tilde{\Gamma}_1 & 0 \\
						0        & \tilde{\Gamma}_2
					\end{bmatrix} + \begin{bmatrix}
						0 & \Sigma^\intercal \\
						\Sigma & 0
		\end{bmatrix} =: \tilde{\Gamma}_0 +  \tilde{\Gamma}_{\text{odb}}
\end{equation}
and split the transformed density in its rank $1$ factor $ f_0 := f_{\tilde{\Gamma}_0}$ and the product of the remaining factors  $f_{\text{odb}} = f_{\tilde{\Gamma}_{\text{odb}}}$.
The latter factor yields the decomposition
\begin{align*}
	f_{\text{odb}}^k(\tilde x_1 \dots, \tilde x_{k}; \tilde x_{k+1}, \dots, \tilde x_d) 
		&= e^{-\frac{1}{2} \tilde x^\intercal \tilde{\Gamma}_{\text{odb}} \tilde x} \\
		&= \exp \left( - [\tilde x_1 \, \dots \, \tilde x_k] \Sigma [\tilde x_{k+1} \, \dots \, \tilde x_d]^\intercal \right) \\
		&= \prod_{i=1}^{\rank A} \exp \left( -\sigma_i \cdot \tilde x_i \cdot \tilde x_{k+i} \right) \\
		&=: f_1 \cdot f_2 \cdots f_{\rank A}, \numberthis\label{eqn:reprfsdb}
\end{align*}
where $\sigma_i := \Sigma_{i, i}$.
Let
\begin{equation}
	f_i(\tilde x) \approx h(\tilde x) = \sum_{i=1}^{r} h_j^{(1)}(\tilde x_i) h_j^{(2)}(\tilde x_{k+i})
\end{equation}
be a rank $r$ approximation of $f_i$ in the $k$-th matricization.
Due to the block structure of the variable transform in (\ref{eqn:domaintransform}), $h$ is also a low-rank approximation when retransformed to the original coordinate system
\begin{equation}
	f_i(x) \approx h(x) = \sum_{j=1}^{r} h_j^{(1)}\left( (V^{\intercal} [x_1  \dots x_k]^\intercal )_i \right) h_j^{(2)}\left( (U^{\intercal} [x_{k+1} \dots x_d]^\intercal)_i \right).
\end{equation}
Furthermore, the change of coordinates preserves the $L^2$ norm since the transformation matrix is orthogonal by construction.
It is therefore equivalent to show low-rank approximability for each matricization in the original and the transformed coordinate system.

To compute the low-rank approximation, we restrict to a finite domain in the transformed coordinate system and subsequently approximate each factor $f_i$ by a polynomial.
To achieve an overall relative error of $\eps$ for the $k$-th matricization, we allow an error of $\eps/2$ when restricting the domain and an error of $\eps/2$ for the subsequent low-rank approximation.
As a simplification, we restrict $f_{\tilde{\Gamma}}$ to a square domain $\Omega = [-a, a]^d$ which has to be chosen large enough such that
\begin{equation}
	\left\|f_{\tilde{\Gamma}} - f_{\tilde{\Gamma}} {\big \vert}_{\Omega} \right\|_{L^2(\R^d)}
		= \|f_{\tilde{\Gamma}} \|_{L^2(\R^d \setminus \Omega)}
		\leq \frac{\eps}{2} \left\|f_{\tilde{\Gamma}}\right\|_{L^2(\R^d)} 
		= \frac{\eps}{2} \left\|f_{\Gamma}\right\|_{L^2(\R^d)}.
\end{equation}
This is equivalent to determining a square domain that covers $1-(\eps/2)^2$ of the volume of the random variable
\[ 
	X \sim \mathcal{N}\left(0, (2 \tilde{\Gamma})^{-1}\right).
\]
This can be ensured by choosing $a$ such that every marginal density covers at least $1-(\eps/2)^2/d$ of the weight.
Given $\Gamma$, the minimal decay rate that can occur is bounded from below by the minimal eigenvalue
\[
	\lambda_{\text{min}}(2 \tilde{\Gamma}) = 2 \lambda_{\text{min}}(\Gamma).
\]
Therefore, it is sufficient to choose $a$ larger than the quantile function of the normal distribution for $\sigma^2 = \frac{1}{2 \lambda_{\text{min}}(\Gamma)}$, i.e.
\[
	a \geq \frac{1}{\sqrt{\lambda_{\text{min}}}} \text{erfc}^{-1}\left( \frac{1}{d} \left( \frac{\eps}{2}\right)^2 \right).
\]
For simplicity, we bound the inverse complementary error function as in \cite{chiani2002improved} which yields
\begin{equation}\label{eqn:defa}
	\frac{1}{\sqrt{\lambda_{\text{min}}}} \text{erfc}^{-1}\left( \frac{1}{d} \left( \frac{\eps}{2}\right)^2 \right) 
		\leq \sqrt{\frac{2}{\lambda_{\text{min}}} \log \left( \frac{\sqrt{2d}}{\eps} \right)} =: a.
\end{equation}
Now, each function $f_i$, treated as a univariate function $x \mapsto e^{-\sigma_i x}$, needs to be approximated by a polynomial on the domain
\[
	\widetilde{\Omega} = [-a^2, a^2].
\]
In general, the subblock matrices $A$ could have full rank and, assuming order $\mathcal{O}(1)$ interpolations of the factors, the rank of the approximation still increases exponentially in the dimension $d$.
This means that without structural assumptions on the precision matrix, we cannot expect our approach to show good FTT approximability.
The decomposition (\ref{eqn:reprfsdb}) of $f_{\text{odb}}$ leads to two natural conditions that result in approximable densities, namely restricting the overall rank (Theorem \ref{thm:lowrankapprox}) or ensuring a fast enough decay of the singular values (Theorem \ref{thm:expdecayapprox}).

Let $l := \rank A$ be the rank of the subdiagonal block $A$ in (\ref{eqn:decomp_of_prec_matrix}) for the $k$-th matricization. 
We approximate each function $f_i$ in (\ref{eqn:reprfsdb}) by a polynomial $p_i$.
To bound the overall error made by doing this, we single out each error pair 
\[
	f_1 \cdots f_l - p_1 \cdots p_l = \sum_{i=1}^{l} p_{1} \cdots p_{i-1} (f_i - p_i) f_{i+1} \cdots f_l
\]
to get
\begin{align}
	E &:= \|f_0 [ f_1 \cdots f_l - p_1 \cdots p_l ] \|_{L^2(\Omega)}  \label{eqn:error1} \\
	  &\leq \sum_{i=1}^{l} \|f_i - p_i \|_{L^\infty(\Omega)} \|f_0 p_1 \cdots p_{i-1} f_{i+1} \cdots f_l \|_{L^2(\Omega)}. \label{eqn:error2}
\end{align}
The $L^\infty$ norm of the polynomial approximation is independent of the dimensionality of the underlying space $\Omega$ and only depends on the approximation quality of the exponential function on the one dimensional domain $\widetilde{\Omega}$.
For simplicity of notation, we identity $f_i$ with its corresponding one dimensional exponential function
\begin{equation}\label{eqn:def_f_1d}
	f_i: \widetilde{\Omega} \to \R, x \mapsto e^{-\sigma_i x}.
\end{equation}
To derive bounds for the error, we apply classical polynomial interpolation theory.
\begin{lemma}\label{lemma:polynomialapprox}
	Let $x_1, \dots, x_r$ be the roots of the $(r-1)$-th Chebyshev polynomial transformed to the interval $\widetilde{\Omega}$. 
	The error of the order $r$ interpolation polynomial $p_i$ of $f_i$ with nodes $x_1, \dots, x_r$ is bounded by
	\begin{align}
		\max_{\widetilde{\Omega}} |f_i - p_{i}|
		&\leq \frac{1}{r! \cdot 2^{r-1}} \left( \sigma_i a^2 \right)^{r} e^{ \sigma_i a^2} \\
		&\leq \left( \frac{\sigma_i e a^2}{2 r} \right)^r e^{ \sigma_i a^2}.
	\end{align}
\end{lemma}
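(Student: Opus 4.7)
The plan is to combine the classical Lagrange remainder formula with the minimax property of Chebyshev nodes. If $p_i$ denotes the polynomial of degree at most $r-1$ interpolating $f_i$ at the $r$ distinct nodes $x_1,\dots,x_r \in \widetilde{\Omega}$, then for every $x \in \widetilde{\Omega}$ there exists $\xi \in \widetilde{\Omega}$ with
\[
f_i(x) - p_i(x) = \frac{f_i^{(r)}(\xi)}{r!} \prod_{j=1}^{r} (x - x_j),
\]
so the proof reduces to bounding the two factors on the right separately.

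For the nodal polynomial I would invoke the classical minimax theorem: among monic polynomials of degree $r$ on $[-1,1]$, the scaled Chebyshev polynomial $2^{1-r} T_{r}$ minimizes the uniform norm, attaining the value $2^{1-r}$, and its zeros are precisely the Chebyshev nodes used in the lemma (I read ``order $r$ interpolation polynomial with nodes $x_1,\dots,x_r$'' as degree-$(r-1)$ interpolation at $r$ nodes, which is the only convention under which the stated constants are consistent). Pulling this bound back from $[-1,1]$ to $\widetilde{\Omega} = [-a^2, a^2]$ via the affine map $s \mapsto a^{2} s$ introduces the Jacobian factor $(a^{2})^{r}$, giving $\max_{\widetilde{\Omega}} \left|\prod_{j=1}^{r} (x - x_j)\right| \le (a^{2})^{r}/2^{r-1}$. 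For the derivative factor I would simply differentiate $r$ times to obtain $f_i^{(r)}(x) = (-\sigma_i)^{r} e^{-\sigma_i x}$, whose maximum modulus on $\widetilde{\Omega}$ is $\sigma_i^{r} e^{\sigma_i a^{2}}$, attained at $x = -a^2$. Feeding these two ingredients into the Lagrange remainder and dividing by $r!$ yields the first inequality of the lemma directly.

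The second inequality is a routine consequence of Stirling's estimate $r! \ge \sqrt{2\pi r}(r/e)^{r}$, from which
\[
\frac{1}{r! \cdot 2^{r-1}} \le \frac{2}{\sqrt{2\pi r}} \left(\frac{e}{2r}\right)^{r} \le \left(\frac{e}{2r}\right)^{r}
\]
for every $r \ge 1$; multiplying by $(\sigma_i a^{2})^{r} e^{\sigma_i a^2}$ gives the stated form. I do not anticipate any substantive obstacle: the argument is clean once the Chebyshev extremal property is in hand. The only points requiring mild care are the affine rescaling from $[-1,1]$ to $\widetilde{\Omega}$ and locating the maximizer of $f_i^{(r)}$ at the left endpoint $x=-a^2$ (because the exponent is positive there).
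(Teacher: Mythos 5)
Your proof is correct and follows essentially the same route as the paper: the paper simply cites Burden--Faires, Corollary~8.11 for the Chebyshev interpolation error bound, which is exactly the Lagrange-remainder-plus-minimax argument you spell out, and it then applies the same Stirling bound for the second inequality. Your reading of the slightly garbled ``roots of the $(r-1)$-th Chebyshev polynomial'' as the $r$ zeros of $T_r$ (degree-$(r-1)$ interpolation at $r$ nodes) is also the interpretation consistent with the constants, so no gaps.
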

\begin{proof}
	The error of a order $r$ polynomial interpolation on the Chebyshev nodes $x_1, \dots, x_r$ is bounded by
	\begin{align*}\label{eqn:polapproxerr}
		|f_i(x) - p_{i}(x)| &\leq \frac{1}{r! \cdot 2^{r-1}} a^{2r}
			\max_{x \in \widetilde{\Omega}} \left| \frac{ \partial^r}{\partial x^r} e^{-\sigma_i x} \right| \\
			&\leq \frac{1}{r! \cdot 2^{r-1}} \left(\sigma_i a^2 \right)^{r} e^{\sigma_i a^2}
	\end{align*}
	(see e.g. \cite[Corollary 8.11]{burden2010numerical}).
	For the second part, we use the Sterling approximation
	\[
		\sqrt{2\pi r} \left(\frac{r}{e}\right)^r \leq r!
	\]
	to estimate the factorial term
	\[
		\frac{1}{r! \cdot 2^{r-1}} \leq \frac{1}{\sqrt{2 \pi r} \left(\frac{r}{e}\right)^r 2^{r-1}} \leq \frac{2}{\sqrt{2 \pi}} r^{-r} \left(\frac{e}{2}\right)^r \leq r^{-r} \left(\frac{e}{2}\right)^r.
	\]
\end{proof}
To get from (\ref{eqn:error2}) to an estimate of the relative error, we need to relate the $L^2$ norm of the partial approximation
\begin{equation}\label{eqn:partialapprox}
	\|f_0 \cdot p_1 \cdots p_{i-1} \cdot f_{i+1} \cdots f_l\|_{L^2(Q)}
\end{equation}
to the $L^2$ norm of the density.
As a preliminary consideration, we quantify the influence of one factor $f_i$ on the norm of the product.
\begin{lemma}\label{lemma:single_term_stability_svd}
	Let
	\[
		f^* \in L^2(\Omega)
	\]
	and $i \in \{1, \dots, l\}$. Then, it holds for any $f_i$ as defined in (\ref{eqn:reprfsdb}) that
	\begin{equation}
		\|f^* \|_{L^2(\Omega)} \leq \|f^* f_i \|_{L^2(\Omega)} \exp \left( \sigma_i a^2 \right).
	\end{equation}
\end{lemma}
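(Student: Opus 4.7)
The plan is straightforward: exploit the fact that on the bounded domain $\Omega = [-a, a]^d$ (in the transformed coordinates in which $f_i$ is expressed) the factor $f_i$ is strictly positive and bounded away from zero in a quantitative way. Concretely, $f_i(\tilde x) = \exp(-\sigma_i \tilde x_i \tilde x_{k+i})$, and since $|\tilde x_i|,|\tilde x_{k+i}| \le a$ on $\Omega$, we have $|\tilde x_i \tilde x_{k+i}| \le a^2$, which gives the pointwise two-sided bound
\[
    e^{-\sigma_i a^2} \le f_i(\tilde x) \le e^{\sigma_i a^2} \quad\text{for all } \tilde x \in \Omega.
\]
In particular, $1/f_i(\tilde x) \le \exp(\sigma_i a^2)$ pointwise on $\Omega$.

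From here the estimate is a one-line manipulation. First I would write $f^* = (f^* f_i) \cdot (1/f_i)$ pointwise on $\Omega$, square, and apply the pointwise upper bound on $1/f_i$:
\[
    |f^*(\tilde x)|^2 = |f^*(\tilde x) f_i(\tilde x)|^2 \cdot \frac{1}{f_i(\tilde x)^2} \le |f^*(\tilde x) f_i(\tilde x)|^2 \, \exp(2\sigma_i a^2).
\]
Integrating over $\Omega$ and taking square roots yields the claim.

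There is essentially no obstacle: the argument is a pointwise lower bound on $f_i$ combined with monotonicity of the integral. The only subtle point to flag is that the lemma is stated in the coordinate system where $f_i$ has the form in (\ref{eqn:reprfsdb}), and $\Omega$ should be understood in those same transformed coordinates; because the change of variables in (\ref{eqn:domaintransform}) is orthogonal, both the $L^2$ norm and the shape of the square domain used later in the approximation argument are preserved, so no additional care is needed when this lemma is later applied to bound the partial approximation norm in (\ref{eqn:partialapprox}).
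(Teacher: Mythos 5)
Your proof is correct and follows essentially the same route as the paper's: a pointwise lower bound $f_i \ge e^{-\sigma_i a^2}$ on $\Omega$, followed by writing $f^* = (f^*f_i)/f_i$ and bounding $1/f_i$ by its supremum. The paper states this in a single line at the level of norms, while you spell it out pointwise and integrate, but the idea is identical.
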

\begin{proof}
	By the definition of $f_i$, we get
	\[	
		f_i(x) = \exp \left( - \sigma_i x_i x_{i+k} \right)
		\geq \exp \left(- \sigma_i a^2 \right) > 0.
	\]
	Using this, we have
	\begin{align}\label{eqn:single_term_stability_bound}
	\|f^*\|_{L^2(\Omega)} &= \| f^* \frac{f_i}{f_i} \|_{L^2(\Omega)} \leq \| f^* f_i\|_{L^2(\Omega)}
		\exp \left( \sigma_i a^2 \right).
	\end{align}
\end{proof}
While this is a crude estimate, the additional exponential term  in (\ref{eqn:single_term_stability_bound}) already comes up as part of the corresponding polynomial approximation in Lemma \ref{lemma:polynomialapprox}.

\begin{lemma}\label{lemma:stability}
	Let
	\begin{equation}\label{eqn:def_eps_stability}
		\eps^{(r_i)}_j := \frac{1}{r_j! \cdot 2^{r_j-1}} \left( \sigma_j a^2 \right)^{r_j} e^{2 \sigma_j a^2}, \;\; j = 1, \dots, l, 
	\end{equation}
	be the product of the $r_j$-th order polynomial interpolation error bound for $f_j$ on $\widetilde{\Omega}$ from Lemma \ref{lemma:polynomialapprox} and the correction factor from Lemma \ref{lemma:single_term_stability_svd}.
	The partial approximation term (\ref{eqn:partialapprox}) can be estimated by
	\begin{equation}
		\|f_0 \cdot p_1 \cdots p_{i-1} \cdot f_{i+1} \cdots f_l \|_{L^2(\Omega)} 
		\leq \prod_{j=1}^{i-1} \left( 1 + \eps^{(r_j)}_{j} \right) e^{\sigma_i a^2} 
			\|f_0 \cdot f_1 \cdots f_l \|_{L^2(\Omega)}.
	\end{equation}
\end{lemma}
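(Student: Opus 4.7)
The plan is to prove this by combining two pointwise estimates, avoiding any induction. First I would use Lemma \ref{lemma:polynomialapprox} together with a pointwise lower bound on the positive factors $f_j$ to control each ratio $p_j/f_j$. Since $\Omega = [-a,a]^d$, we have $|\tilde x_j \tilde x_{k+j}| \leq a^2$, so
\[
    f_j(\tilde x) = e^{-\sigma_j \tilde x_j \tilde x_{k+j}} \geq e^{-\sigma_j a^2} > 0 \quad \text{on } \Omega.
\]
Dividing the $L^\infty$ interpolation bound from Lemma \ref{lemma:polynomialapprox} by this lower bound gives, pointwise on $\Omega$,
\[
    \frac{|p_j(\tilde x) - f_j(\tilde x)|}{f_j(\tilde x)}
        \leq e^{\sigma_j a^2} \cdot \frac{1}{r_j! \cdot 2^{r_j-1}} (\sigma_j a^2)^{r_j} e^{\sigma_j a^2}
        = \eps^{(r_j)}_j,
\]
which matches precisely the definition of $\eps^{(r_j)}_j$ in (\ref{eqn:def_eps_stability}). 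Hence $|p_j(\tilde x)| \leq f_j(\tilde x) \bigl(1 + \eps^{(r_j)}_j\bigr)$ on $\Omega$.

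Next I would multiply these pointwise bounds for $j=1,\ldots,i-1$ together with the nonnegative factors $f_0$ and $f_{i+1},\ldots,f_l$ and take the $L^2(\Omega)$-norm. Since all quantities involved are nonnegative, monotonicity of the $L^2$-norm yields
\[
    \|f_0 \cdot p_1 \cdots p_{i-1} \cdot f_{i+1} \cdots f_l\|_{L^2(\Omega)}
        \leq \prod_{j=1}^{i-1} \bigl(1 + \eps^{(r_j)}_j\bigr)
             \,\|f_0 \cdot f_1 \cdots f_{i-1} \cdot f_{i+1} \cdots f_l\|_{L^2(\Omega)}.
\]
Finally I would reintroduce the missing factor $f_i$ by applying Lemma \ref{lemma:single_term_stability_svd} to $f^\ast := f_0 \cdot f_1 \cdots f_{i-1} \cdot f_{i+1} \cdots f_l$, which picks up the last factor $e^{\sigma_i a^2}$ and completes the estimate.

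The proof is essentially routine once the right bookkeeping is in place; the only conceptual step is recognizing that the compactness of $\Omega$ gives both a lower bound on $f_j$ (yielding one factor $e^{\sigma_j a^2}$ when passing from $|p_j - f_j|$ to $|p_j - f_j|/f_j$) and allows the use of Lemma \ref{lemma:single_term_stability_svd} (which provides a second factor $e^{\sigma_i a^2}$). This is precisely why $\eps^{(r_j)}_j$ carries $e^{2\sigma_j a^2}$ rather than only $e^{\sigma_j a^2}$, and why the residual stability factor $e^{\sigma_i a^2}$ appears outside the product in the claimed inequality. No issue arises from the variable dependencies: the $f_j$ depend on disjoint coordinate pairs $(\tilde x_j, \tilde x_{k+j})$, but the pointwise argument does not use this, so the proof goes through for any such product.
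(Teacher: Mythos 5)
Your proof is correct. The conclusion matches the paper's statement, and every step checks out: the pointwise lower bound $f_j \geq e^{-\sigma_j a^2}$ on $\Omega$, the resulting ratio bound $|p_j - f_j|/f_j \leq \eps^{(r_j)}_j$, the multiplicative pointwise estimate $|p_j| \leq f_j\bigl(1 + \eps^{(r_j)}_j\bigr)$, and the final application of Lemma \ref{lemma:single_term_stability_svd} to reintroduce $f_i$.

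The route is somewhat different from the paper's, even though both rely on the same two lemmas. The paper works entirely at the level of $L^2$ norms: it uses the triangle inequality to write $\|f^* p_\varrho\|_{L^2} \leq \|f^* f_\varrho\|_{L^2} + \|f_\varrho - p_\varrho\|_{L^\infty}\|f^*\|_{L^2}$, absorbs the $\|f^*\|_{L^2}$ term via Lemma \ref{lemma:single_term_stability_svd}, and then iterates this bound inductively over the factors $p_1, \ldots, p_{i-1}$. You instead derive a pointwise multiplicative bound $|p_j| \leq f_j\bigl(1 + \eps^{(r_j)}_j\bigr)$, multiply over $j$ in a single step, and take $L^2$ norms once at the end, applying Lemma \ref{lemma:single_term_stability_svd} only a single time. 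Your version avoids the induction and makes it transparent where the extra factor $e^{\sigma_j a^2}$ in the definition of $\eps^{(r_j)}_j$ enters (it is exactly $1/f_j$), whereas the paper's version has the advantage of establishing the intermediate estimate $\|f^* p_1 \cdots p_k\|_{L^2} \leq \prod_{j=1}^{k}(1+\eps^{(r_j)}_j)\|f^* f_1 \cdots f_k\|_{L^2}$ as a reusable building block. Both are elementary; your pointwise phrasing is arguably a touch cleaner.
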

\begin{proof}
	Let $f^* \in L^2(\Omega)$ and $\varrho \in \{1, \dots, i-1\}$.
	Then
	\begin{align*}
		\|f^* p_\varrho \|_{L^2(\Omega)} & \leq \|f^* f_\varrho 
			\|_{L^2(\Omega)} + \|f_\varrho - p_\varrho\|_{L^\infty(\widetilde{\Omega})} \|f^*\|_{L^2(\Omega)} \\
		&\leq \left( 1 + e^{\sigma_\varrho a^2} 
			\|f_\varrho - p_\varrho\|_{L^\infty(\widetilde{\Omega})} \right)
			\|f^* f_\varrho \|_{L^2(\Omega)} \\
		& \leq \left( 1 + \eps^{(r_\varrho)}_\varrho \right) \|f^* f_\varrho\|_{L^2(\Omega)}.
	\end{align*}
	Note that the multiplicative factor does not depend on $f^*$. 
	Therefore, we can simply extend this inductively to more than one factor
	\begin{align*}
		\|f^* p_{1} \cdots p_{k} \|_{L^2(\Omega)}
		&\leq \|f^* p_{1} \cdots p_{k-1} f_{k} \|_{L^2(\Omega)} 
			+ \|f_{k} - p_{k}\|_{L^\infty(\Omega)} \|f^* p_{1} \cdots p_{k-1} \|_{L^2(\Omega)} \\
		&\leq \prod_{j=1}^{k-1} \left( 1 + \eps_{j}^{(r_{j})} \right)
			\cdot \left( \|f^* f_{1} \cdots f_{k-1} f_{k} \|_{L^2(\Omega)} 
			+ \|f_{k} - p_{k}\|_{L^\infty(\Omega)} \|f^* f_{1} \cdots f_{k-1} \|_{L^2(\Omega)} \right) \\
		&\leq \prod_{j=1}^{k} \left( 1 + \eps_{j}^{(r_{j})} \right) \|f^* f_{1} \cdots f_{k} \|_{L^2(\Omega)}.
	\end{align*}
	Setting $f^* = f_0 f_{i+1} \cdots f_{l}$, $k = i-1$, and using Lemma \ref{lemma:single_term_stability_svd} to add the missing factor $f_i$ completes the proof.
\end{proof}
The following lemma will be useful later to estimate the required order of the interpolation polynomial.
\begin{lemma}\label{lemma:powertoexptrick}
	Let $c > 0$. Then it holds that
	\begin{equation}
		\left( \frac{c}{c + r} \right)^{c + r} \leq e^{-r}.
	\end{equation}
\end{lemma}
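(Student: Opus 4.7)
The inequality to prove is $\left( \frac{c}{c+r} \right)^{c+r} \leq e^{-r}$ for $c > 0$, and the plan is to reduce it to the elementary inequality $e^x \geq 1 + x$ that holds for all real $x$.

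The key observation is that the base $\frac{c}{c+r}$ can be rewritten as $1 - \frac{r}{c+r}$, which has exactly the form $1 + x$ with $x = -\frac{r}{c+r}$. So the first step is to invoke $e^x \geq 1 + x$ at this particular value of $x$, yielding
\[
    \frac{c}{c+r} = 1 - \frac{r}{c+r} \leq \exp\!\left( -\frac{r}{c+r} \right).
\]
Both sides are positive (since $c, r > 0$), so the second step is simply to raise both sides to the (positive) power $c+r$, which preserves the inequality and gives
\[
    \left( \frac{c}{c+r} \right)^{c+r} \leq \exp\!\left( -\frac{r}{c+r} \cdot (c+r) \right) = e^{-r},
\]
as required.

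There is no real obstacle: the lemma is essentially a packaged form of $(1+u/n)^n \leq e^u$. An alternative route would be to take logarithms and show $(c+r)\log(1 + r/c) \geq r$, which reduces (after substituting $u = r/c$) to $(1+u)\log(1+u) \geq u$; this in turn follows from differentiating $g(u) := (1+u)\log(1+u) - u$ and noting $g(0) = 0$, $g'(u) = \log(1+u) \geq 0$. But the one-line argument via $e^x \geq 1+x$ is cleaner and is the route I would take.
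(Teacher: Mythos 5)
Your proof is correct, and it takes a genuinely different route from the paper's. The paper works on the logarithmic scale: it sets $f(r) := (c+r)\bigl(\log c - \log(c+r)\bigr) + r$, observes $f(0)=0$, computes $f'(r) = \log c - \log(c+r) \leq 0$ for $r\geq 0$, and concludes $f(r)\leq 0$ by monotonicity — that is, essentially the ``alternative route'' you sketch at the end, except the paper differentiates in $r$ directly rather than substituting $u=r/c$. Your main argument instead applies the convexity bound $e^x \geq 1+x$ at $x = -\tfrac{r}{c+r}$ and raises both sides to the power $c+r$. The two approaches buy slightly different things: yours is a one-line reduction to a standard inequality and avoids any calculus, while the paper's derivative argument is self-contained and makes the ``equality at $r=0$, monotone decrease afterwards'' structure explicit, which can be more transparent when the lemma is being used to calibrate a threshold. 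One small point of hygiene: the lemma as stated only assumes $c>0$ and leaves $r$ unquantified; your step ``both sides are positive since $c,r>0$'' implicitly restricts to $r>0$. That matches the paper's intended use (and the paper's own proof also restricts to $r\geq 0$), but it is worth stating the domain $r\geq 0$ explicitly, noting that at $r=0$ the inequality holds with equality.
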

\begin{proof}
	We look at the difference of the logarithms of both sides
	\[
		f(r) := \log \left( \left( \frac{c}{c + r} \right)^{c + r}\right) - \log \left(e^{-r} \right)
			= (c+r) (\log(c) - \log(c+r)) + r.
	\]
	Note that $f(0) = 0$. The derivative of $f$ satisfies
	\[
		\frac{\text{d}}{\text{d} r} f(r) = \log(c) - \log(c+r) \leq 0 \;\; \text{ for } \;\; r \geq 0.
	\]
	Therefore, $f(r) \leq 0$ for $r \geq 0$ and the result follows from the monotonicity of the logarithm.
\end{proof}

\subsubsection{Proof of Theorem \ref{thm:lowrankapprox}}

At this stage, we need to start making use of the structure of the precision matrix.
We start with the low-rank case as described in Theorem \ref{thm:lowrankapprox}.
For this, we assume that for every $k$-matricization the subdiagonal blocks $A_k$ have $\rank A_k \leq l$ where the singular values $\sigma_i^k$, $i=1, \dots, l$, are uniformly bounded and set
\begin{equation}
	\sigma := \max_{k, i} \sigma_i^k.
\end{equation}
\begin{proof}[of Theorem \ref{thm:lowrankapprox}]
	Again, we fix a matricization $k \in \{1, \dots, d-1\}$ and look at the function $f_{\tilde \Gamma}(\tilde x)$ on the transformed domain as in (\ref{eqn:domaintransform}).
	As a first step, we bound the rank necessary to construct a low-rank approximation with a relative error bounded by $\epsM < 1$ for this matricization.
	We fix a finite subdomain $\Omega = [-a, a]^d$ with $a$ as in (\ref{eqn:defa}) such that the relative $L^2$ error due to the cutoff is bounded by $\epsM/2$.
	Additionally, we need to choose a polynomial order $r_i$ such that the contribution due to the approximation on $\Omega$ is bounded by $\epsM/2$.
	We choose a uniform interpolation order $r_i = r$ for all $f_i$, $i=1, \dots, l$.
	Let
	\[
		\eps^{(r)} := \frac{1}{r! \cdot 2^{r-1}} \left( \sigma a^2 \right)^{r} e^{2 \sigma a^2}.
	\]
	Using Lemma \ref{lemma:polynomialapprox} and Lemma \ref{lemma:stability}, we can bound the error $E$ when using an order $r$ polynomial interpolation by
	\begin{align*}
		E &\leq \sum_{i=1}^{l} \|f_i - p_i\|_{L^\infty(\Omega)}
			\|f_0 \cdot f_1 \cdots f_{i-1} \cdot p_{i+1} \cdots p_l \|_{L^2(\Omega)} \\
		&\leq \left(1 + \eps^{(r)}\right)^{l-1} l \eps^{(r)} \|f_0 \cdot f_1 \cdots f_l \|_{L^2(\Omega)}.
	\end{align*}
	Therefore, we need to choose $r$ large enough such that
	\begin{equation}\label{eqn:relerror_lowrank}
		\left(1 + \eps^{(r)} \right)^{l-1} l \eps^{(r)} \leq \frac{\epsM}{2}.
	\end{equation}
	For any $r$ that satisfies (\ref{eqn:relerror_lowrank}), we have $\eps^{(r)} \leq \frac{\epsM}{2 l}$.
	We can utilize this to simplify the left-hand side in (\ref{eqn:relerror_lowrank}) by using
	\[
		(1 + \eps^{(r)})^{l-1} \leq \exp \left( l \log \left( 1 + \frac{\epsM}{2l} \right) \right)
			\leq e^{\epsM/2}
            \leq e^{1/2}.
	\]
	Using Lemma \ref{lemma:polynomialapprox}, it is sufficient to choose $r$ large enough such that we can assure
	\[
		\left( \frac{\sigma e a^2}{2 r} \right)^r e^{ 2 \sigma a^2}
			\leq \frac{\epsM}{ 2 l e^{1/2}}
	\]
	which is equivalent to 
	\begin{equation}\label{eqn:twosteprankbound}
		\left( \frac{\sigma e a^2}{2 r} \right)^r \leq \frac{\epsM}{2 l e^{1/2}} e^{-2 \sigma a^2}.
	\end{equation}
    We compute the necessary interpolation order in two steps:
	First, we set $r_1 := \frac{e}{2} \sigma a^2$ which pushes the left-hand side in (\ref{eqn:twosteprankbound}) to $1$.
	Then, we can use Lemma \ref{lemma:powertoexptrick} to bound the remaining part
	\[
		\left( \frac{r_1}{r_1 + r_2} \right)^{r_1 + r_2} \leq e^{-r_2}
	\]
	and as a result compute the second part $r_2$ as
	\begin{align}
		\log \left( \frac{2 l e^{1/2}}{\epsM} e^{2 \sigma a^2} \right) 
		&\leq 2 \sigma a^2 + \log \left( l e^{1/2}  \frac{2}{\epsM} \right) \\
		& =: r_2.
	\end{align}
    Then, choosing $r = r_1 + r_2$ is sufficient to guarantee (\ref{eqn:relerror_lowrank}).
	Since we cannot interpolate with non-integer order, we need to choose the next larger integer value.
    To account for this, we simply add $1$ to our bound on $r$.
	Overall, this gives us
	\begin{align}
		r &\leq \left(\frac{e}{2} + 2\right) \sigma a^2 + \log \left( l e^{1/2}  \frac{2}{\epsM} \right)+ 1 \\
		&= \left(\frac{e}{2} + 2\right) \sigma a^2 + \log \left( l e^{3/2}  \frac{2}{\epsM} \right).
	\end{align}
	Choosing $a$ as in (\ref{eqn:defa}) such that additional error that comes from restricting to the finite domain $\Omega$ is bounded by $\epsM/2$ yields
	\begin{align}
		r &\leq  \left(\frac{e}{2} + 2\right) \frac{2 \sigma}{\lambda_{\text{min}}} \log \left( \frac{2\sqrt{2d}}{\epsM} \right) + \log \left( l e^{3/2}  \frac{2}{\epsM} \right) \\
			&\leq \left( 1 + 7 \frac{\sigma}{\lambda_{\text{min}}} \right) \log \left( \frac{\sqrt{8d}}{\epsM} \right) + \log \left( e^{3/2} \frac{l}{2} \right) \label{eqn:boundonr1} \\
			&\leq \left( 1 + 7 \frac{\sigma}{\lambda_{\text{min}}} \right) \log \left( 7 l \frac{\sqrt{d}}{\epsM} \right) \label{eqn:boundonr2}
	\end{align}
	where we used in the second step that $d \geq 2$.
	We have shown that if we restrict $f_{\tilde \Gamma}$ to the domain $\Omega$ and choose the order of our interpolation larger than (\ref{eqn:boundonr1}) or (\ref{eqn:boundonr2}), the overall error of an rank $R = r^l$ approximation on $\R^d$ is bounded by
	\begin{align*}
		\|f_{\tilde{\Gamma}} - \mathds{1}_{\Omega} f_0 p_1 \cdots p_l \|_{L^2(\R^d)}
			&\leq \| f_{\tilde{\Gamma}} - f_{\tilde{\Gamma}} \big \vert_{\Omega} \|_{L^2(\R^d)} 
				+ \|f_{\tilde{\Gamma}} - f_0 p_1 \cdots p_l \|_{L^2(\Omega)} \\
			&\leq \epsM \|f\|_{L^2(\R^d)}.
	\end{align*}
	Since we have seen that proving a low-rank bound for $f_\Gamma(\tilde x)$ on the transformed coordinate system is equivalent to proving a low-rank bound for $f_\Gamma(x)$, this holds for the rank necessary to approximate the $k$-th matricization to a specified accuracy.
    This rank bound does not depend on $k \in \{1, \dots, d-1\}$ and we can apply Theorem \ref{thm:fttapproximation} to bound the error of the FTT approximation $\hat f$ of the density by
	\begin{equation}
		\|f_{\Gamma} - \hat{f}\|_{L^2(\R^d)} \leq \sqrt{d-1} \epsM \|f_\Gamma\|_{L^2(\R^d)}.
	\end{equation}
	Setting $\epsM := \frac{\eps}{\sqrt{d}}$ completes the proof.
\end{proof}

\subsubsection{Proof of Theorem \ref{thm:expdecayapprox}}

The key of proving low-rank approximability is having few, well behaved factors $f_i$ that we can interpolate using polynomials.
For Theorem \ref{thm:lowrankapprox}, we enforced this by assuming a low rank which might be limiting in practice.
As an alternative, we look at the case of a rapidly decaying singular spectrum.
In Theorem \ref{thm:expdecayapprox}, we assume that for all $k$-matricizations, it holds
\begin{equation}
	\sigma^k_i \leq \alpha e^{-\theta i}.
\end{equation}
The idea of the proof is to neglect all singular values that are small enough to not perturb the density too much (effectively approximating the corresponding exponential function by $1$).
First, we need to determine the number of singular values that we have to look at.
\begin{lemma}\label{lemma:entriestoapprox}
    Let $k \in \{1, \dots, d-1\}$ and $0 < \eps < 1$.
    Under the assumptions made above, the error made by neglecting all terms larger than
    \begin{equation}
        l \geq \frac{1}{\theta} \log \left( \frac{e^{\theta}}{1 + e^{\theta}}{\frac{3 \alpha a^2}{2\eps}} \right)
    \end{equation}
    in the decomposition (\ref{eqn:reprfsdb}) of the $k$-th matricization of $f_\Gamma$ is bounded by
    \begin{equation}
        \|f_{\tilde \Gamma} - f_0 f_1 \dots f_{\lfloor l \rfloor}\|_{L^2(\Omega)} \leq \eps \|f_{\tilde \Gamma}\|.
    \end{equation}
\end{lemma}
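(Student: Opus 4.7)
The plan is to exploit the multiplicative structure
$f_{\tilde\Gamma} = f_0 \prod_{j=1}^{\rank A} f_j$ from (\ref{eqn:reprfsdb}) and argue that truncating the product after $\lfloor l \rfloor$ terms introduces only a small $L^2$ relative error, provided the discarded factors are close to $1$ in $L^\infty(\Omega)$. Concretely, I would write
\[
    f_{\tilde\Gamma} - f_0 f_1 \cdots f_{\lfloor l \rfloor}
    = f_0 f_1 \cdots f_{\lfloor l \rfloor} \cdot \Bigl( \prod_{j > \lfloor l \rfloor} f_j - 1 \Bigr)
\]
and split the $L^2(\Omega)$ norm into the $L^\infty(\Omega)$ norm of the bracketed factor times the $L^2(\Omega)$ norm of the retained head product.

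For the $L^\infty$ factor I would use that, since $f_j(\tilde x) = \exp(-\sigma_j \tilde x_j \tilde x_{k+j})$ and $|\tilde x_j|,|\tilde x_{k+j}| \le a$ on $\Omega$,
\[
    \Bigl| \prod_{j > \lfloor l \rfloor} f_j(\tilde x) - 1 \Bigr|
    \;=\; \bigl| e^{G(\tilde x)} - 1 \bigr|
    \;\le\; |G(\tilde x)| \, e^{|G(\tilde x)|},
\]
where $G(\tilde x) = -\sum_{j>\lfloor l \rfloor} \sigma_j \tilde x_j \tilde x_{k+j}$ satisfies the deterministic bound
\[
    |G(\tilde x)| \;\le\; a^2 \sum_{j > \lfloor l \rfloor} \sigma_j
    \;\le\; a^2 \alpha \sum_{j > \lfloor l \rfloor} e^{-\theta j}
    \;=:\; S.
\]
The geometric tail $S$ decays like $e^{-\theta \lfloor l \rfloor}$, so it can be made arbitrarily small by enlarging $l$.

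For the $L^2$ factor I would reinsert the missing tail by applying Lemma \ref{lemma:single_term_stability_svd} iteratively for $j=\lfloor l \rfloor+1,\dots,\rank A$, which yields
\[
    \|f_0 f_1 \cdots f_{\lfloor l \rfloor}\|_{L^2(\Omega)}
    \;\le\; \|f_{\tilde\Gamma}\|_{L^2(\Omega)} \prod_{j > \lfloor l \rfloor} e^{\sigma_j a^2}
    \;=\; \|f_{\tilde\Gamma}\|_{L^2(\Omega)} \, e^{S}.
\]
Combining the two estimates gives $\|f_{\tilde\Gamma} - f_0 f_1 \cdots f_{\lfloor l \rfloor}\|_{L^2(\Omega)} \le S\,e^{2S}\,\|f_{\tilde\Gamma}\|_{L^2(\Omega)}$. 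Choosing $l$ so that $S$ is small (say at most a constant $\le 1/2$) keeps $e^{2S}$ bounded by a harmless constant, and then imposing $S \le c\,\eps$ for an appropriate $c$ enforces the desired relative accuracy.

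It then remains to invert the geometric tail bound: $S \le c \eps$ is equivalent to $e^{-\theta \lfloor l \rfloor} \lesssim \frac{\eps}{\alpha a^2}$ times a $\theta$-dependent constant coming from the sum of the geometric series (and from passing between $l$ and $\lfloor l \rfloor$, which loses a factor of $e^\theta$). Taking logarithms and collecting constants yields a threshold of the form $l \ge \theta^{-1}\log\bigl(\tfrac{C \alpha a^2}{\eps}\bigr)$, and a careful balance of the constants from the geometric series, from $e^{2S}$, and from the use of $\lfloor l \rfloor$ in place of $l$ produces exactly the prefactor $\tfrac{e^\theta}{1+e^\theta}\cdot\tfrac{3}{2}$ in the statement. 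The main obstacle is this constant-chasing step: the conceptual inequalities above are standard, but getting the sharp form of the prefactor requires choosing the splitting $S\,e^{2S} \le \eps$ carefully so that the $e^{2S}$ correction and the geometric-series denominator combine into the stated $\tfrac{e^\theta}{1+e^\theta}$ form rather than a looser $\tfrac{1}{e^\theta-1}$.
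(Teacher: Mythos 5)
Your argument is correct and follows essentially the same route as the paper: treat the discarded tail $\prod_{j>\lfloor l\rfloor}f_j$ as a multiplicative perturbation bounded uniformly on $\Omega$, and invert a geometric tail estimate to pick $l$. The only real difference is the factorization. The paper pulls $f_{\tilde\Gamma}$ itself out,
\[
f_{\tilde\Gamma} - f_0 f_1\cdots f_{\lfloor l\rfloor}
= f_{\tilde\Gamma}\bigl(1 - (f_{\lfloor l+1\rfloor}\cdots f_{\rank A})^{-1}\bigr),
\]
bounds the bracket in $L^\infty(\Omega)$ by $e^{S}-1$ (with $S=\alpha a^2\sum_{j>\lfloor l\rfloor}e^{-\theta j}$ in your notation), and applies $e^{x}-1\le\tfrac{3}{2}x$ for small $x$, so no comparison between $\|f_0\cdots f_{\lfloor l\rfloor}\|_{L^2}$ and $\|f_{\tilde\Gamma}\|_{L^2}$ is needed and Lemma~\ref{lemma:single_term_stability_svd} is not invoked in this step at all. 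You pull out the head product instead, which obliges you to reinsert the tail via Lemma~\ref{lemma:single_term_stability_svd} and leaves you with $S e^{2S}$ rather than $e^{S}-1\le\tfrac{3}{2}S$; this is strictly looser and would produce a somewhat larger absolute constant (imposing $S\le\tfrac{1}{2}$ gives you $e$, versus the paper's $\tfrac{3}{2}$). The paper's decomposition is the cleaner choice, but yours is not wrong.

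On the constant-chasing you single out as the obstacle: the $\tfrac{e^\theta}{1+e^\theta}$ does not come from the $e^{2S}$ correction at all; in the paper's derivation it is supposed to come solely from the geometric tail together with $\lfloor l\rfloor>l-1$. But $\sum_{j\ge\lfloor l+1\rfloor}e^{-\theta j}=e^{-\theta\lfloor l\rfloor}/(e^\theta-1)\le\tfrac{e^\theta}{e^\theta-1}e^{-\theta l}$, and $\tfrac{e^\theta}{e^\theta-1}>\tfrac{e^\theta}{1+e^\theta}$ for every $\theta>0$ (the former diverges as $\theta\to0$, the latter tends to $\tfrac{1}{2}$), so the prefactor as stated in the lemma appears to be a slip for $\tfrac{e^\theta}{e^\theta-1}$. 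You should not expend effort trying to reproduce it from your $Se^{2S}$ estimate; that is not where it lives in the paper's argument, and it does not look right as written anyway.
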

\begin{proof}
    Using our decomposition of $f_\Gamma$, we have
    \begin{align*}
        \|f_{\tilde \Gamma} - f_0 f_1 \dots f_{\lfloor l \rfloor}\|_{L^2(\Omega)} 
        &= \|f_{\tilde \Gamma} (1 - (f_{\lfloor l+1 \rfloor} \cdots f_{\rank A})^{-1})\|_{L^2(\Omega)}\\
        &\leq \|1 - (f_{\lfloor l+1 \rfloor} \cdots f_{\rank A})^{-1}\|_{L^\infty(\Omega)} \| f_{\tilde \Gamma}\|_{L^2(\Omega)}.
    \end{align*}
    We need to choose $l$ large enough such that
    \[
        \big \|1 - (f_{\lfloor l+1 \rfloor} \cdots f_{\rank A})^{-1} \big\|_{L^\infty(\Omega)}
            = \big\|1 - e^{\sum_{j=\lfloor l+1 \rfloor}^{\rank A} \alpha e^{- \theta j} \tilde x_j  \tilde x_{k+j} } \big\|_{L^\infty(\Omega)}
            = e^{\sum_{j=\lfloor l+1 \rfloor}^{\rank A} \alpha e^{- \theta j} a^2 } - 1
            \leq \eps.
    \]
    Since
    \[
        e^x - 1 \leq \frac{1}{\log(2)} x \leq \frac{3}{2} x \;\; \text{for} \;\; 0 < x < \log(2)
    \]
    and $\eps < 1$ it is sufficient to choose $l$ large enough such that
    \[
        \frac{3}{2}\sum_{j=\lfloor l+1 \rfloor}^{\rank A} \alpha  e^{- \theta j} a^2 
            \leq \frac{3}{2} \alpha a^2 \sum_{j= \lfloor l+1 \rfloor}^{\infty} e^{-\theta j}
            \leq \frac{e^{\theta}}{1 + e^{\theta}}\frac{3 \alpha a^2}{2} e^{-\theta l}
            \leq \eps.
    \]
    Solving for $l$ completes the proof.
\end{proof}
In particular, the number of factors that we need to deal with explicitly grows logarithmically in the target accuracy in each matricization.
We proceed similarly as for the low-rank case to approximate these terms.

\begin{proof}[of Theorem \ref{thm:expdecayapprox}]
    We fix a matricization $k \in \{1, \dots, d-1\}$ and construct a low-rank approximation of $f_{\tilde \Gamma}$ with relative error $\epsM$.
    We choose the domain $\Omega = [-a, a]^d$ with cutoff error $\epsM/2$ as in (\ref{eqn:defa}).
    Let
    \begin{equation} \label{eqn:defl}
        l := \frac{1}{\theta} \log \left( \frac{e^{\theta}}{1 + e^{\theta}}{\frac{6 \alpha a^2}{\epsM}} \right).
    \end{equation}
    We ignore all singular values $\sigma_i^k$ for $i > l$.
    This introduces an error which is controlled using Lemma \ref{lemma:entriestoapprox} 
    \[
        \|f_{\tilde \Gamma} - f_0 f_1 \cdots f_{\lfloor l \rfloor} \|_{L^2(\Omega)} \leq \frac{\epsM}{4} \|f_{\tilde \Gamma} \|_{L^2(\Omega)}.
    \]
    It remains to choose ranks for the polynomial interpolation of the $\lfloor l \rfloor$ remaining terms.
    Let
    \begin{equation}\label{eqn:errorbound_expdecay}
    	\eps^{(r_i)}_i := \frac{1}{r_i! \cdot 2^{r_i-1}} \left( \alpha e^{-\theta i} a^2 \right)^{r_i} e^{ 2 \alpha e^{-\theta i} a^2}
    \end{equation}
    be the product of the interpolation error and one-term norm correction.
	Similar to before, we can bound the overall error $E$ using Lemma \ref{lemma:stability} and thus need to choose the ranks such that
	\begin{align*}
		E &\leq \sum_{i=1}^{\lfloor l \rfloor} \prod_{j=1}^{i-1} \left(1 + \eps_j^{(r_j)}\right) 
			e^{\sigma_i a^2} \|f_i - p_i\|_{L^\infty(\Omega)} \|f_{\tilde{\Gamma}}\|_{L^2(\Omega)} \\
		&\leq \prod_{i=1}^{\lfloor l \rfloor} \left( 1 + \eps^{(r_i)}_{i} \right) 
			\sum_{i=1}^{\lfloor l \rfloor} \eps^{(r_i)}_{i} \|f_{\tilde{\Gamma}}\|_{L^2(\Omega)}   \\
		&\leq \exp \left( \sum_{i=1}^{\lfloor l \rfloor} \eps_{i}^{(r_i)} \right) 
		 \sum_{i=1}^{\lfloor l \rfloor} \eps^{(r_i)}_{i} \|f_{\tilde{\Gamma}}\|_{L^2(\Omega)} 
			\leq \frac{\epsM}{4} \|f_{\tilde{\Gamma}}\|_{L^2(\Omega)}. \numberthis\label{eqn:errorexpdecay}
	\end{align*}
	We can immediately see that for any sequence of $\left( \eps_i^{(r_i)} \right)_i$ that satisfies (\ref{eqn:errorexpdecay}), it holds
	\[
		\sum_{i=1}^{\lfloor l \rfloor} \eps^{(r_i)}_{i} \leq \frac{1}{4} \;\; \text{ and } \;\; 4 e^{\frac{1}{4}} \leq 6.
	\]
	Therefore, it is sufficient to choose $(r_i)_i$ large enough such that
	\[
		\sum_{i=1}^{\lfloor l \rfloor} \eps_i^{(r_i)} \leq \frac{\epsM}{6}.
	\]
    In the following, we assume that $l > 1$, otherwise, the bound in Theorem \ref{thm:expdecayapprox} holds trivially.
	For simplicity, we assume equidistributed errors
	\begin{equation}
		\eps_i^{(r_i)} \leq \frac{\epsM}{6 l } \;\; \text{for all} \;\; 1 \leq i \leq \lfloor l \rfloor.
	\end{equation}
	We use the bound on $\eps_i^{(r)}$ from Lemma \ref{lemma:polynomialapprox}.
    For each $i$, we have to choose the order $r_i$ big enough such that
	\[
		\left( \frac{\alpha e^{-\theta i} e a^2}{2 r} \right)^r e^{ 2 \alpha e^{-i \theta} a^2} \leq \frac{\epsM}{6l}.
	\]
	As before, we compute the order in two parts $r_i =  r_i^{(1)} + r_i^{(2)}$.
	First, we set
	\[
		r_i^{(1)} := \frac{e}{2} \alpha e^{-\theta i} a^2.
	\]
	Again, we use Lemma \ref{lemma:powertoexptrick} to compute $r_i^{(2)}$ by ensuring that 
	\[
		e^{-r_{i}^{(2)}} \leq \frac{\epsM}{6l} e^{ -2 \alpha e^{-\theta i} a^2}
	\]
	which gives us
	\[
		r_i^{(2)} := 2 \alpha e^{-\theta i} a^2 + \log \left( \frac{6 l}{\epsM} \right).
	\]
	Together, we get an upper bound for the necessary integer order on level $i$ by
	\begin{align*}
		r_i &\leq \left(2 + \frac{e}{2}\right) \alpha e^{-\theta i} a^2 + \log \left( \frac{6 l}{\epsM}\right) + 1 \\
		&\leq \frac{7\alpha}{\lambda_{\text{min}}} e^{-\theta i} \log \left( \frac{\sqrt{8d}}{\epsM} \right)
			+ \underbrace{\log \left( \frac{6}{\epsM \theta} \log \left( \frac{e^{\theta}}{1 + e^{\theta}}{\frac{6 \alpha a^2}{\epsM}} \right) \right) + 1}_{=: A}.
    \end{align*}
    Using the identity $\log(x) \leq x - 1$, we simplify the second term $A$ further 
    \begin{align}
        A &\leq \log \left( \frac{6}{\epsM \theta} \right)
                + \log \left( \frac{e^{\theta}}{1 + e^{\theta}}{\frac{6 \alpha a^2}{\epsM}} \right) 
                \label{eqn:boundofA} \\
            &\leq \log \left( \frac{6}{\epsM \theta} \right)
                + \log \left( \frac{e^{\theta}}{1 + e^{\theta}}{\frac{12 \alpha}{\epsM \lambda_{\text{min}}}} \right)
                + \log \log \left( \frac{\sqrt{8d}}{\epsM} \right) \\
            &\leq \log \left( \frac{6}{\epsM \theta} \right)
                + \log \left( \frac{e^{\theta}}{1 + e^{\theta}}{\frac{12 \alpha}{e \epsM \lambda_{\text{min}}}} \right)
                + \log \left( \frac{\sqrt{8d}}{\epsM} \right).
    \end{align}
    Thus
    \begin{align*}
        r_i &\leq \frac{7\alpha}{\lambda_{\text{min}}} e^{-\theta i} \log \left( \frac{\sqrt{8d}}{\epsM} \right)
            + \log \left( \frac{6}{\epsM \theta} \right)
            + \log \left( \frac{e^{\theta}}{1 + e^{\theta}}{\frac{12 \alpha}{e \epsM \lambda_{\text{min}}}} \right)
            + \log \left( \frac{\sqrt{8d}}{\epsM} \right) \\
            &\leq \frac{7\alpha}{\lambda_{\text{min}}} e^{-\theta i} \log \left(C \frac{\sqrt{d}}{\epsM} \right)
                + 3 \log \left( C \frac{\sqrt{d}}{\epsM} \right) \\
            &\leq  \left( 1 + \frac{3 \alpha}{\lambda_{\text{min}}} e^{-\theta i} \right) 
                        3 \log \left( C \frac{\sqrt{d}}{\epsM} \right)
    \qquad \quad \text{with} \quad
        C := \max \left \{ \sqrt{8}, \frac{5}{\theta}, \frac{e^{\theta}}{1 + e^{\theta}}{\frac{4 \alpha}{ \lambda_{\text{min}}}} \right\}
    \end{align*}
    independent of dimension $d$ and accuracy $\eps$ (where we used that $d \geq 2$).
    Comparing the definition of $l$ in (\ref{eqn:defl}) with (\ref{eqn:boundofA}), we can see that this computation additionally yields
    \[
        l \leq \frac{2}{\theta} \log \left( C \frac{\sqrt{d}}{\epsM} \right).
    \]
	The overall rank is therefore bounded by
	\begin{align*}
		R \ &\leq \prod_{i=1}^{\lfloor l \rfloor } r_i
		\leq \prod_{i=1}^{\lfloor l \rfloor} \left( \left( 1 + \frac{3 \alpha}{\lambda_{\text{min}}} e^{-\theta i} \right) 
            3 \log \left( C \frac{\sqrt{d}}{\epsM} \right) \right) \\
		&\leq \left( \prod_{i=1}^{\infty}\left(1 + \frac{3 \alpha}{\lambda_{\text{min}}} e^{-\theta i} \right) \right)
			\left(3  \log \left( C \frac{\sqrt{d}}{\epsM} \right) \right)^{l}\\
		&\leq \exp\left(\frac{3 \alpha}{\lambda_{\min} \theta}\right) 
            \left(3 \log \left( C \frac{\sqrt{d}}{\epsM} \right) \right)^{\frac{2}{\theta} \log \left( C \frac{\sqrt{d}}{\epsM} \right)} \\
        &
= \exp\left(\frac{3 \alpha}{\lambda_{\min} \theta}\right) 
            \left( C \frac{\sqrt{d}}{\epsM} \right)
                ^{\frac{2}{\theta} \log \left( 3 \log \left( C \frac{\sqrt{d}}{\epsM}\right) \right)}
	\end{align*}
    where we have used
    \[
        \prod_{i=1}^{\infty}\left(1 + \frac{3 \alpha}{\lambda_{\text{min}}} e^{-\theta i} \right)
        \leq \exp \left( \sum_{i=1}^{\infty}\frac{3 \alpha}{\lambda_{\text{min}}} e^{-\theta i}\right)
        = \exp \left( \frac{3 \alpha}{\lambda_{\min} (e^{\theta} - 1)} \right)
        \leq \exp\left(\frac{3 \alpha}{\lambda_{\min} \theta}\right) 
    \]
    in the last inequality.
	The rest of the proof follows the same steps as the proof of Theorem \ref{thm:lowrankapprox}.
\end{proof}

\section{Numerical Tests of the Rank Bounds}
\label{sec:numerics}

In this section, we present numerical tests of the FTT rank bounds from Theorem \ref{thm:lowrankapprox} and Theorem \ref{thm:expdecayapprox}.
For a choice of precision matrix, we compute a high accuracy representation of the multivariate Gaussian density as described in Section \ref{sec:computingftt} and, subsequently, check the FTT ranks that occur after compressing this representation to various accuracies.
Our aim here is to test if we can reproduce the qualitative statements made by our theorems.
We focus on the dependence of the FTT ranks on the dimension, the approximation accuracy and rank/the decay rate of the singular values in the subdiagonal blocks.

To create a fair test, we avoid any structure in the precision matrices other than that required in the respective theorems, randomizing the precision matrices while fixing the singular spectrum of the subdiagonal blocks to a prescribed sequence.
These matrices are generated as follows: starting with $M \in \R^{d \times d}$, with elements randomly chosen between $-1$ and $1$, we symmetrize by taking $\tilde M := M M^\intercal$ and then modify the subdiagonal blocks in $\tilde M$ by replacing the singular values in the block with the predefined sequence.
We alternate between the different subdiagonal blocks until the singular spectrum has converged.
To fix a uniform minimal eigenvalue, we add a suitably scaled identity matrix.

Since we can only approximate the function $f_\Gamma$ on the finite domain $Q = [-a, a]^d$, we need to choose an appropriate value for $a$.
Ideally, $Q$ should be big enough such that the $L^2$-error introduced by the cutoff is below the approximation accuracy that we are interested in.
For accuracies up to $\eps = 10^{-13}$, we would need to choose a large value for $a$ and thus a very high numbers of interpolation nodes $n$.
On large parts of this domain, the density is effectively zero which makes the tensor completion challenging.
However, in practice the ranks do not appear to depend strongly on the domain size $a$ even at high accuracy, see Figure \ref{fig:test-dependence} for an example.
We therefore choose a fixed value of $a=7$ for all following tests which results in a typical cutoff error between \num{3e-5} and \num{1e-6}, depending on the specific example.
\begin{figure}[ht]
    \center
    \includegraphics[width=0.95\linewidth]{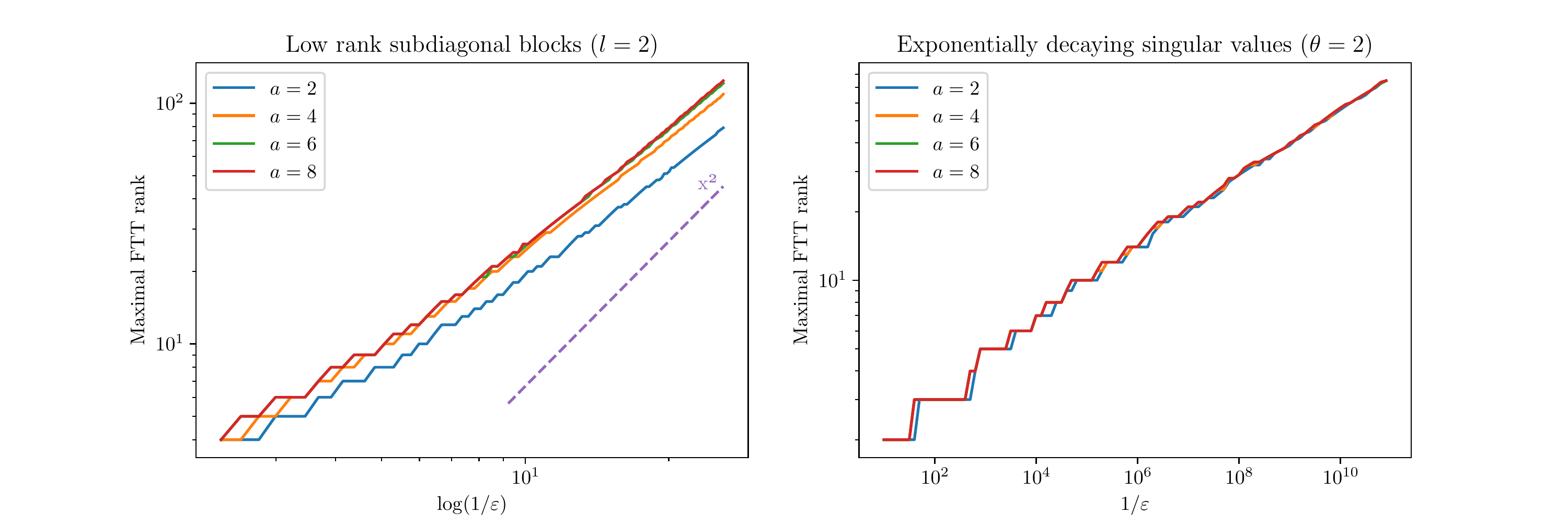}
    \caption{FTT ranks of the approximations of two exemplary Gaussian densities for varying domain sizes $Q = [-a, a]^{d}$.
    Both precision matrices are constructed randomly, as described in Section \ref{sec:numerics-low-rank} (left) and in Section \ref{sec:numerics-exp-decay} (right).
    On the left, we have set the subdiagonal rank to $l=2$ and and $d=15$. In addition, we have plotted a slope line of the predicted quadratic growth rate in $\log(1/\eps)$ (dotted line). On the right, we have set the decay rate to $\theta = 2$ and $d=30$.}
    \label{fig:test-dependence}
\end{figure}

\subsection{Dimension}

First, we fix a set of parameters for both theorems and vary the number of dimensions $d$ of the Gaussian density.
For the low-rank case, we set $\sigma \equiv 1$ and the rank of the subdiagonal blocks to~$2$.
We set the decay parameters of the exponentially decaying singular values to $\theta = 1, \alpha = 1$.
In both cases, we set the minimal singular value of the precision matrix to $\lambda_{\text{min}} = 0.5$.
We vary the number of dimensions between $d=5$ and $d=40$ and, for each choice, compute $10$ realization with a randomized precision matrix as described above.
Each Gaussian density is approximated with a relative accuracy of $\eps = 10^{-4}$.
The resulting FTT ranks can be seen in Figure \ref{fig:test-dim}.
\begin{figure}[t]
    \center
    \includegraphics[width=0.95\linewidth]{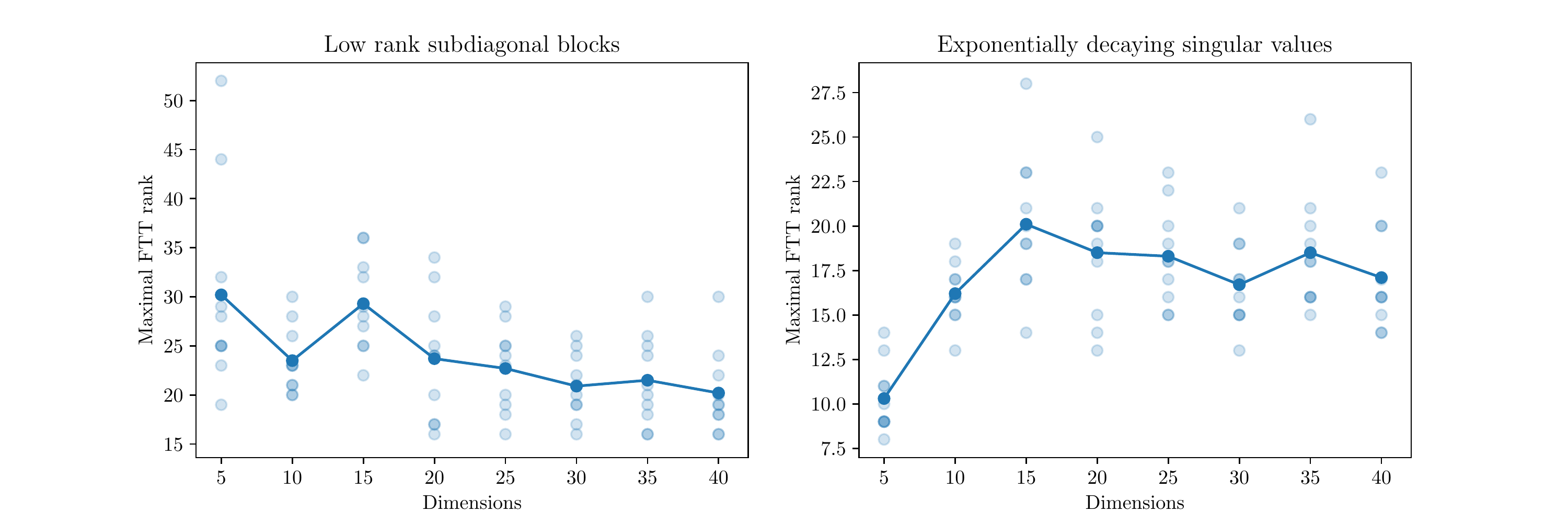}
    \caption{FTT ranks of approximations of a Gaussian density with randomized precision matrix at fixed relative accuracy $\eps = 10^{-4}$ for varying number of dimensions $d$.
    The ranks of the individual completions (translucent) as well as their averages (opaque) are shown.
    On the left, the precision matrices have subdiagonal blocks of rank $2$ with singular values $\sigma \equiv 1$.
    On the right, the singular spectrum decays with rate $e^{-j}$.
    In both cases, $\lambda_{\text{min}} = 0.5$.}
    \label{fig:test-dim}
\end{figure}

In both Theorem \ref{thm:lowrankapprox} and Theorem \ref{thm:expdecayapprox}, we predict an increase of the rank as the dimension grows.
We cannot reproduce this behavior in our tests.
Instead, it appears that the rank stays at a similar level or reduces slightly as $d$ grows.
The dimension $d$ appears in two places of the derivation of our results in Section \ref{sec:proofs}: when choosing the cutoff domain $\Omega$ and when applying Theorem \ref{thm:ttapproximation}.
The choice of $\Omega$ as a square domain is clearly suboptimal, however, the application of Theorem \ref{thm:ttapproximation} is fundamental to our approach.
This shows that any estimate that is based on approximating matricizations is not suitable to produce a dimension-independent estimate.

For the precision matrices with exponentially decaying singular values in the subdiagonal blocks, a sharp increase of the rank between $d=5$ and $d=15$ occurs which does not follow the general trend outlined above.
This could potentially be explained by the small row/column length of the subdiagonal blocks in low dimensions.
For small $d$ this length could be substantially smaller than $l$ in Lemma \ref{lemma:entriestoapprox}, leading to an initial increase of the number of relevant singular values in the subdiagonal blocks as the dimension increases.

\subsection{Low-Rank Subdiagonal Blocks}
\label{sec:numerics-low-rank}

Now, we fix the dimension and look at the FTT ranks when varying the structure of the subdiagonal blocks and the target accuracy.
We start with the case of a fixed rank $l$ in each subdiagonal block of the precision matrix.
We fix all singular values to be uniformly $\sigma \equiv 1$, set the dimension to $d=15$, and the minimal eigenvalue of the precision matrix to $\lambda_{\text{min}} = 0.5$.
We compute the approximation of $10$ different realizations of precision matrices for the subdiagonal ranks $l \in \{1, 2, 3, 4\}$. 
Due to memory constraints, the target accuracy of the TT-cross algorithm is varied depending on the subdiagonal rank.
The number of interpolation points $n$ varies between 140 and 270, depending on the target accuracy.
In all cases, we make sure that the sampled relative error on $Q$ is below the target accuracy.
From Theorem~\ref{thm:lowrankapprox}, we expect a polynomial growth rate in the logarithm of the inverse accuracy.
To explore this, we look at a log-log plot of the FTT ranks $r$ and the accuracy $\log(1/\eps)$ (meaning we have a log-log scaled x-axis).

In Figure \ref{fig:test-low-rank}, we present the results of this test.
To emphasize the growth behavior for the different parameters, we plot the average of the maximal FTT ranks of the approximations (with individual realizations as translucent lines).
We can see that at least for our random set of examples, the averaged curve seems to capture the overall behavior well.
While there is variation between different realizations of precision matrices, for large values of $\log(1/\eps)$, the FTT rank trajectory in log-log space seems to only differ by a constant.
This indicates that the growth behavior of the FTT ranks is primarily dictated by the ranks in the subdiagonal blocks.
\begin{figure}[t]
    \center
    \includegraphics[width=0.95\linewidth]{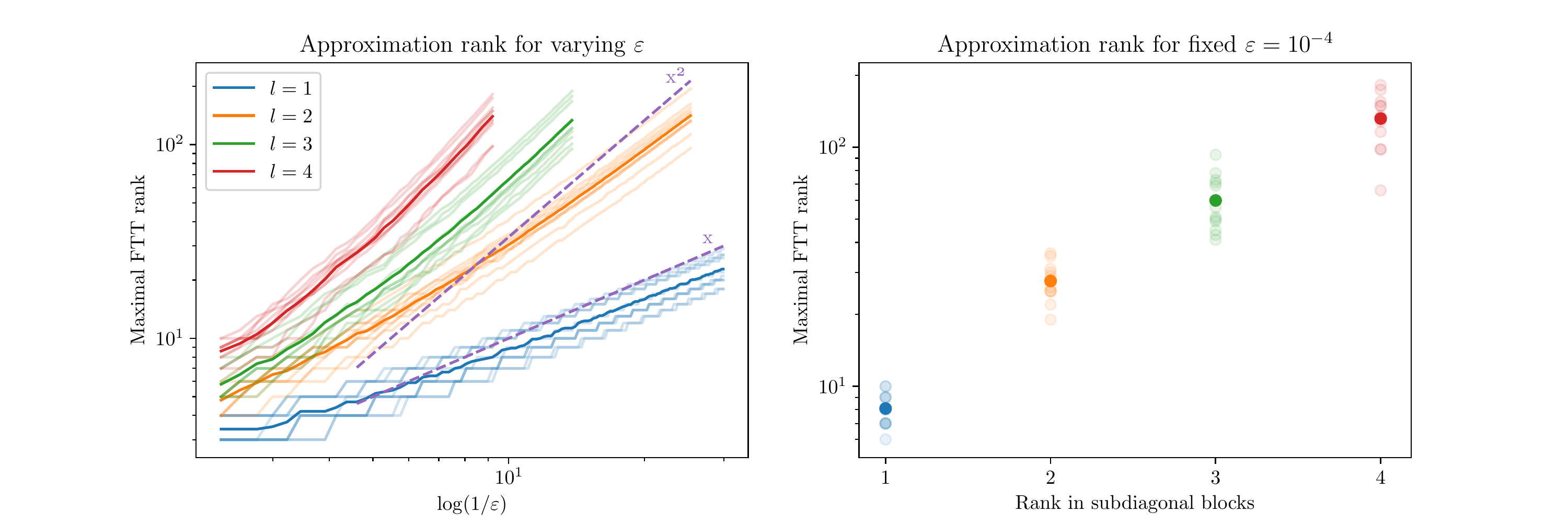}
    \caption{Ranks of the FTT approximations of a $15$ dimensional Gaussian density with low-rank subdiagonal blocks. 
    On the left, we show a log-log plot of the maximal FTT ranks versus
$1/\eps$, for different subdiagonal block ranks $l$.
    The dotted lines indicate the slope of linear and quadratic growth in $\log(1/\eps)$.
    On the right, we show the growth of the FTT ranks for a fixed relative accuracy $\eps = 10^{-4}$ when increasing the ranks in the subdiagonal blocks.}
    \label{fig:test-low-rank}
\end{figure}

After an initial phase, the FTT rank plot appears to be linear in the log-log plane which indicates that the predicted poly-logarithmic growth rate is qualitatively correct.
However, the rates in which the rank growths appears to be less than the rate $\log(1/\eps)^l$ suggested by Theorem \ref{thm:lowrankapprox}.
Nevertheless, when we compare the FTT ranks for for a fixed accuracy approximation of $\eps = 10^{-4}$ and vary the rank of the subdiagonal block, we can clearly see that on average, the necessary FTT rank increases exponentially in the number of non-zero singular values.
Therefore, our predicted exponential dependence on the number of singular values does not appear to be overly pessimistic.

\subsection{Exponentially Decaying Singular Values}\label{sec:numerics-exp-decay}

As a second test, we follow the setup of Section \ref{sec:numerics-low-rank} but fix the singular spectrum of each subdiagonal block to follow the sequence
\[
    \sigma_i = e^{-\theta i}
\]
for the $i$-th singular value in each matricization $k$ and vary $\theta \in \{2.5, 2, 1.5, 1\}$.
We increase the dimension to $d=30$ to make sure that we have sufficiently large subdiagonal blocks to not cut off relevant values of the singular spectrum.
The number of interpolation nodes $n$ varies between $230$ and $310$.

\begin{figure}
    \center
    \includegraphics[width=0.95\linewidth]{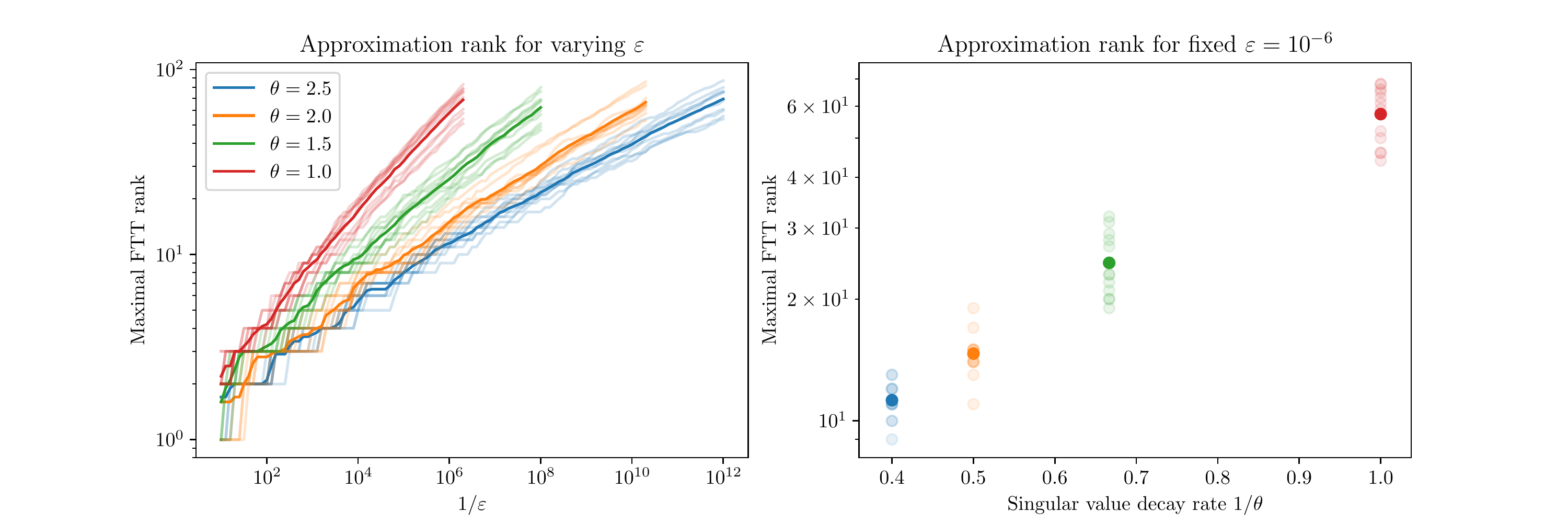}
    \caption{Ranks of the FTT approximation of a $30$ dimensional Gaussian density with exponentially decaying singular values in the subdiagonal blocks.
    On the left, we show a log-log plot of the maximal FTT ranks given the accuracy $1/\eps$ for different choices of the decay rate $\theta$ of the singular values in the subdiagonal blocks.
    On the right, we show the growth of the FTT ranks for a fixed relative accuracy $\eps = 10^{-6}$ when decreasing the decay rate $\theta$.}
    \label{fig:test-exp-decay}
\end{figure}
The result can be seen in Figure \ref{fig:test-exp-decay}.
Note that compared to Figure \ref{fig:test-low-rank}, the $x$-axis now shows $1/\eps$ instead of $\log(1/\eps)$.
Again, the FTT rank trajectories are to a high degree determined by the prescribed singular spectrum.
For $1/\eps$ sufficiently big, the rank growth appears to be polynomial in $1/\eps$.
This confirms the polynomial growth rate of FTT ranks predicted in Theorem \ref{thm:expdecayapprox}.
In particular, the ranks grow faster in the accuracy than for precision matrices with low-rank subdiagonal blocks.
Overall, as in the previous case, the numerical tests agree qualitatively with the predictions made by Theorem \ref{thm:expdecayapprox}.

\section{Application to Bayesian Filtering} 
\label{sec:filtering_example}

In this section, we present an example of how our theoretical results
from Section~\ref{sec:rankbounds} can be used to predict the 
TT ranks of numerical approximations of the filtering distribution in
Bayesian data assimilation of high-dimensional dynamical systems. The
state of a dynamical system $x(t)\in\R^d$ is assumed to evolve 
according to a stochastic differential equation
\[
\text{d}x = f(x) \text{d}t + \text{d}\xi_t\,,
\]
where $f(x)\in\R^d$ is a known force field, and $\xi_t$ is a random
noise process, such as a Wiener process~\cite{LasotaMackey}, with a
distribution possessing a density $\pi_{\xi}(\xi)$ that is independent of time.
The system is observed at discrete times $t_{\ell}$\,,
providing noisy measurements 
\[
	z_{\ell} = h(x(t_{\ell})) + \eta_{\ell}\,, \quad  \ell=1,2,\ldots,
\]
of the state,  where $h(x)$ is a known observation function and
$\eta_{\ell}$ is the observational noise. The distribution of the
observed  values $z_{\ell}$ given the state $x(t_{\ell})$ with density
$\rho(z_{\ell}|x(t_{\ell}))$  (also called the \emph{likelihood}) is fully 
described by the distribution of the noise vector $\eta_{\ell}$,
assumed to be given. All measurements up to a time~$t$ are collected
into a  set $Z(t) = \{z_{\ell}:~t_{\ell}\le t\}$. We will also use
$Z_k = Z(t_k)$ as shorthand notation.

Sequential Bayesian inference aims to find a time-dependent 
sequence of \emph{filtering densities} $\rho(x(t) | Z(t))$.
This process can be split into two steps~\cite{J70}:
\begin{enumerate}
 \item \emph{Prediction}: Since $Z(t)=Z_{\ell}$ is constant for $t \in
   [t_{\ell}, t_{\ell+1})$,  the continuous-time evolution of the
   filtering  density from $t_\ell$ to $t_{\ell+1}$ satisfies the 
   Chapman-Kolmogorov equation
 \[
  \rho(x(t_{\ell+1}) | Z_{\ell}) = \int \pi_{\xi}\left(x(t_{\ell+1}) -
    X(y;\Delta t_{\ell+1}) \right) \rho(y|Z_{\ell}) \, \text{d}y,
 \]
 where $X(y;\tau)$ denotes the deterministic solution of $x'
   = f(x)$ at time $\tau$ starting from the initial state $x(0)=y$
 and where $\Delta t_{\ell+1} = t_{\ell+1}-t_{\ell}$.
 In particular, if $\xi \sim \mathcal{N}(0,\epsilon I)$ (a vector of
 independent Brownian motions), the \emph{prediction density}
$\rho(x(t) | Z_{\ell})$, for $t \in
   [t_{\ell}, t_{\ell+1}]$, can be computed as the solution of the
 Fokker-Planck equation (FPE)
 \begin{equation}\label{eq:fpe}
  \frac{\partial \tilde{\rho}(x, \tau)}{\partial \tau} = \nabla_x\cdot (f
  \tilde{\rho}) + \epsilon \nabla_x^2 \tilde{\rho},
 \end{equation}
 starting from the initial state $\tilde{\rho}(x, 0) =
 \rho(x(t_{\ell}) | Z_{\ell})$ and integrating \eqref{eq:fpe} from
 $\tau=0$ to $\Delta t_{\ell+1}$.
 \item \emph{Update}: At time $t_{\ell+1}$, a new measurement
   $z_{\ell+1}$ can be assimilated using Bayes' rule to
   compute the new filtering distribution
 \begin{equation}\label{eq:filterup}
  \rho(x(t_{\ell+1}) | Z_{\ell+1}) = \frac{\rho(z_{\ell+1} | x(t_{\ell+1})) \rho(x(t_{\ell+1}) | Z_{\ell})}{\rho(z_{\ell+1} | Z_{\ell})},
 \end{equation}
where $\rho(x(t_{\ell+1}) | Z_{\ell}) = \tilde{\rho}(x(t_{\ell+1})
  , \Delta t_{\ell+1})$, i.e.\ the solution of \eqref{eq:fpe} at
  time $\tau= \Delta t_{\ell+1}$.
\end{enumerate}

Low-rank tensor formats, such as the TT or the FTT format, have
been used extensively to design efficient algorithms for the
Fokker-Planck equation \eqref{eq:fpe}
(cf.~\cite{DKhOs-parabolic1-2012,Sueli-greedy_FP-2012,
dk-qtt-tucker-2013,dektor-dyntensor-2020}),
as well as for approximating pointwise products and integrals of
multivariate functions as in \eqref{eq:filterup}
(cf.~\cite{ro-crossconv-2015}). 
However, results on the theoretical convergence analysis of the FTT
format applied to filtering densities are still lacking.

In the case of linear dynamics $f(x) = Fx$, a linear
observation function $h(x) = Hx$ and normally distributed $\xi \sim
\mathcal{N}(0,\epsilon I)$ and 
$\eta_{\ell} \sim \mathcal{N}(0,R)$,
the filtering distribution is also Gaussian with mean $\bar x(t)$
  and covariance matrix $C(t)$ defined by the \emph{Kalman filter}
  (which also consists of two steps):
\begin{enumerate}
 \item \emph{Prediction}: Solve the system of ODEs
 \begin{align}
  \frac{\text{d}\bar y}{\text{d}\tau} & = f(\bar y) \quad \text{and} \quad 
  \frac{\text{d} P}{\text{d}\tau} = F P + P F^\top + \epsilon I,
\label{eq:kalpred}
 \end{align}
starting from $\bar y(0) = \bar x(t_{\ell})$ and $P(0) = C(t_{\ell})$
and integrating from $\tau=0$ to $\Delta t_{\ell+1}$. 
 \item \emph{(Kalman) Update}: The system \eqref{eq:kalpred} can be
   solved explicitly, leading to
 \begin{align}
 \bar x(t_{\ell+1}) & = \bar y(\Delta t_{\ell+1}) + K \big(z_{\ell+1} - H
                      \bar y(\Delta t_{\ell+1})\big), \nonumber\\
  C(t_{\ell+1}) & = \big(I - K H\big) P(\Delta t_{\ell+1}) \label{eq:kalup}
 \end{align}
where  $K  = P(\Delta t_{\ell+1}) H^\top \big(H P(\Delta t_{\ell+1})
H^\top + R\big)^{-1}$ is the so-called \emph{Kalman gain}.
\end{enumerate}

For nonlinear dynamical systems, the Kalman filter can be applied to
the linearised system at time~$t_\ell$. This is the so-called
\emph{Extended Kalman filter (EKF)}, which is the de facto standard in
nonlinear state estimation due to its computational efficiency,
especially when compared to solving the Fokker-Planck equation
\eqref{eq:fpe}. However, in a general nonlinear system the EKF filter 
can rapidly diverge. Nevertheless, since it is relatively cheap to 
compute, the EKF covariance matrix $C(t_{\ell+1})$ can be used to
estimate the TT ranks of the corresponding Gaussian density function
via Theorem~\ref{thm:lowrankapprox} or \ref{thm:expdecayapprox}.
For systems that are \emph{weakly} nonlinear, relative to the frequency
of observations, the solution to \eqref{eq:fpe}--\eqref{eq:filterup} will exhibit
comparable TT ranks. One can then use the cheap information provided
by the EKF to initialise the ranks in an efficient TT or FTT algorithm
for the full Bayesian filtering problem
\eqref{eq:fpe}--\eqref{eq:filterup} and analyze its performance
rigorously. However, this goes beyond the scope of this paper and will
be the focus of a separate paper.

\subsection{Numerical example: coupled
  pendulums} \label{sec:filtering_example_numerics}

We illustrate the approach on a system of $N$ perturbed, weakly coupled
pendulums. The $k$th pendulum is described by its displacement from
the equilibrium $\theta_k$ and its velocity $\omega_k$. The
displacement follows the usual inertial law $d\theta_k/dt=\omega_k$,
and the acceleration is composed of the retracting gravity force,
and the coupling forces between adjacent pendulums. 
Overall, $N$ weakly coupled pendulums can be described by the
$d=2N$-dimensional kinematic evolution $x' = f(x)$ with
\begin{equation}
\label{eq:pend}
 x = \left(\begin{array}{c}
                           \theta_1 \\ \omega_1 \\
                           \theta_2 \\ \omega_2 \\
                           \vdots \\
                           \theta_N \\ \omega_N
                          \end{array}
                    \right) \quad \text{and} \quad
                  f(x) = \left(\begin{array}{cccc}
                           \omega_1 \\ -\sin\theta_1 +\kappa\left[\theta_2 - \theta_1\right] \\
                           \omega_2 \\ -\sin\theta_2 +\kappa\left[
                            (\theta_3 - \theta_2) - (\theta_2 - \theta_1) \right] \\
                           \vdots \\
                           \omega_N \\ -\sin\theta_N +\kappa\left[-(\theta_N - \theta_{N-1})\right]
                          \end{array}
                     \right).
 \end{equation}
Here $\kappa$ is a coupling coefficient that we set to $0.2$ in this
experiment. Note that the coupling force is similar to the discrete
one-dimensional Laplacian acting on the displacement variables. 
The filtering is
performed assuming that \eqref{eq:pend} is perturbed
by a vector $\xi_t$ of independent Brownian motions with variance
$10^{-3}$, modelling for example outside influences on the pendulums
or computational bias due to numerical treatment. However, an
important aspect is also that the perturbation stabilizes the filter.
The initial density $\rho(x|Z_0)$ is chosen to be
zero-mean Gaussian with covariance $C(0) = 0.09I$.

We synthesize observational data by an accurate noise-free integration
of the 'true' dynamics with a relative tolerance of $10^{-6}$,
starting from $\theta_k=0.25$ and $\omega_k=0$ at $t=0$. The
measurements are then produced as the positions of the first pendulum,
perturbed by Gaussian noise with variance $0.04$ for $t_\ell = 0.4 \cdot \ell$, $\ell=0,\ldots,250$. This gives the final time
$T = t_{250}=100$.

The exact (Bayes-optimal) filtering densities
  $\rho(x(t) | Z(t))$ for $0 < t \le T$, are now approximated using TT
  methods. In each filtering step
  from $t_\ell$ to $t_{\ell+1}$, we solve the Fokker-Planck
  equation~\eqref{eq:fpe} using tAMEn~\cite{d-tamen-2018} with the 
TT-SVD approximation threshold $\delta_{\text{appr}} = 10^{-2}$ (as
introduced in Section~\ref{sec:computingftt}). The same threshold is
used for approximating the pointwise product of TT decompositions in
the update  step~\eqref{eq:filterup}. At the same time, as outlined
above, we compute the Kalman update $C(t_{\ell+1})$ at $t_{\ell+1}$
for the linearised dynamics from \eqref{eq:kalup}, i.e., using the linearised version 
of~\eqref{eq:pend} at $t_\ell$ where all $\sin$ functions are replaced by their arguments.

\definecolor{C0}{RGB}{31,119,180}
\definecolor{C1}{RGB}{255,127,14}
\definecolor{C2}{RGB}{44,160,44}
\definecolor{C3}{RGB}{214,39,40}
\definecolor{C4}{RGB}{148,103,189}
\definecolor{C5}{RGB}{140,86,75}
\definecolor{C6}{RGB}{227,119,194}
\definecolor{C7}{RGB}{127,127,127}
\definecolor{C8}{RGB}{188,189,34}
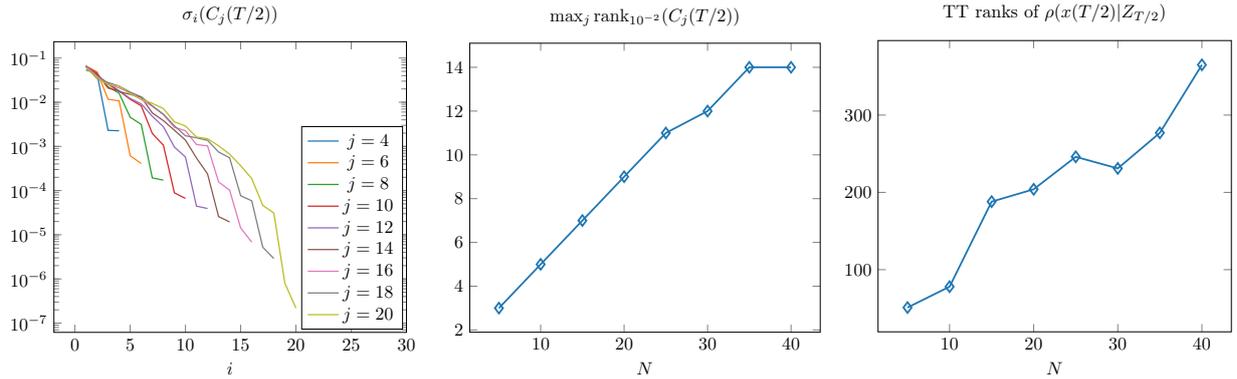
\begin{figure}[t]
\centering
\noindent\resizebox{0.35\linewidth}{!}{%
\begin{tikzpicture}
\begin{axis}[%
xmode = normal,
ymode = log,
xlabel = $i$,
title = $\sigma_i(C_j(T/2))$,
legend style={at={(0.99,0.01)},anchor=south east},
xmax=30,
]
\addplot[C0,line width=0.7pt,no marks] table[header=false,x index=0,y index=2]{Images/offsvd-N20-T100.dat}; \addlegendentry{$j=4$};
\addplot[C1,line width=0.7pt,no marks,width=1pt] table[header=false,x index=0,y index=3]{Images/offsvd-N20-T100.dat}; \addlegendentry{$j=6$};
\addplot[C2,line width=0.7pt,no marks] table[header=false,x index=0,y index=4]{Images/offsvd-N20-T100.dat}; \addlegendentry{$j=8$};
\addplot[C3,line width=0.7pt,no marks] table[header=false,x index=0,y index=5]{Images/offsvd-N20-T100.dat}; \addlegendentry{$j=10$};
\addplot[C4,line width=0.7pt,no marks] table[header=false,x index=0,y index=6]{Images/offsvd-N20-T100.dat}; \addlegendentry{$j=12$};
\addplot[C5,line width=0.7pt,no marks] table[header=false,x index=0,y index=7]{Images/offsvd-N20-T100.dat}; \addlegendentry{$j=14$};
\addplot[C6,line width=0.7pt,no marks] table[header=false,x index=0,y index=8]{Images/offsvd-N20-T100.dat}; \addlegendentry{$j=16$};
\addplot[C7,line width=0.7pt,no marks] table[header=false,x index=0,y index=9]{Images/offsvd-N20-T100.dat}; \addlegendentry{$j=18$};
\addplot[C8,line width=0.7pt,no marks] table[header=false,x index=0,y index=10]{Images/offsvd-N20-T100.dat}; \addlegendentry{$j=20$};
\end{axis}
\end{tikzpicture}
}~
\noindent\resizebox{0.32\linewidth}{!}{%
\begin{tikzpicture}
\begin{axis}[%
xmode = normal,
ymode = normal,
xlabel = $N$,
title = $\max_j\mathrm{rank}_{10^{-2}}(C_j(T/2))$,
legend style={at={(0.01,0.99)},anchor=north west},
]
\addplot[C0,line width=1pt,mark=diamond,mark size=3pt] table[header=true,x=N,y=rmax]{Images/offranks-tol1e-2.dat}; 
\end{axis}
\end{tikzpicture}
}~
\noindent\resizebox{0.33\linewidth}{!}{%
\begin{tikzpicture}
\begin{axis}[%
xmode = normal,
ymode = normal,
xlabel = $N$,
title = TT ranks of $\rho(x(T/2)|Z_{T/2})$,
legend style={at={(0.01,0.99)},anchor=north west},
]
\addplot[C0,line width=1pt,mark=diamond,mark size=3pt] table[header=true,x=N,y=rmax]{Images/Npendulum_new_ranks.dat}; 
\end{axis}
\end{tikzpicture}
}
\caption{Results of the filtering example. In the left figure, we plot the singular values of the
  off-diagonal blocks $C_j(T/2)$ of the covariance matrix $C(T/2)$ 
  computed by the EKF at time $T/2$. The maximum numerical ranks of
    $C_j(T/2)$, truncated at $10^{-2}$, and the TT ranks of the
    approximate solutions $\rho(x(T/2)|Z_{T/2})$
    of the exact Bayes filtering problem
    \eqref{eq:fpe}--\eqref{eq:filterup} at time $T/2$ are plotted as
    functions of $N$ in the middle and right panels, respectively.}
\label{fig:pend}
\end{figure}

In Figure~\ref{fig:pend} (left), we track the singular values of the
off-diagonal blocks $C_j(T/2) := C_{j+1:d,\,1:j}(T/2)$
of the covariance matrix of the extended Kalman filter (using MATLAB
notation to denote the subblocks) for $N=20$ and $\ell=125$,
i.e.\ at time $t_{\ell}=T/2$. In the beginning of the filtering, the density
  function is close to the initial product density, while near the final time $T$
  the distribution follows the state of the deterministic system
  with a small and isotropic uncertainty around it. At intermediate
  times the highest correlations between the variables are observed,
  and consequently the ranks of $C_j$ and of the TT approximation of the solution 
  to the Fokker-Planck equation are largest.

We vary the number of pendulums $N$ (and consequently the dimension $d=2N$),
and plot the maximum numerical ranks of $C_j(T/2)$ over all
  possible off-diagonal blocks, truncated at $10^{-2}$ (middle
  figure), as well  as the TT ranks of the approximate nonlinear
  filtering density $\rho(x(T/2)|Z(T/2))$ (right figure). 
The singular values exhibit an exponential decay, which is probably
due to the local coupling of the components of the system.
Moreover, both the ranks of the off-diagonal blocks of the covariance
matrix, and the TT ranks of the Bayes-optimal filtering solution grow
linearly with the number of pendulums. This is in line with the
results of Theorem~\ref{thm:expdecayapprox} where we predicted an
algebraic scaling of the TT ranks with the dimension. 
A more detailed analysis of this application, in particular
  comparing with alternative approaches and 
  highlighting its efficiency and its excellent scaling with dimension,
  are left for a separate publication.

\section{Conclusions} 
\label{sec:conclusions}

The present paper introduces rigorous a-priori bounds on the necessary ranks to represent Gaussian densities to a specified accuracy in the functional Tensor-Train (FTT) format. We showed that the FTT ranks can be related to the singular spectrum of the subdiagonal blocks of the precision matrix of the Gaussian distribution. In particular, under certain  assumptions on the correlation structure, the Tensor-Train format was shown to provide an efficient surrogate for high dimensional Gaussian distributions that is not affected by the curse of dimensionality. As motivated carefully in the introduction, we see this as a stepping stone for a rigorous analysis of low-rank tensor approximation of more general distributions. The rank bounds that we proved are not sharp, but as the numerical results in Section \ref{sec:numerics} show, we managed to qualitatively capture the decay rate and the dependence on the required accuracy~$\eps$.
The dependence on the number of dimensions appears to be overly pessimistic, but this dependence is in part due to fundamental properties of the Tensor-Train format and it is not clear how this can be avoided.

By utilizing properties of the (functional) Tensor-Train format, the results can immediately be generalized, e.g. to sums and (elementwise) products of Gaussian densities. Therefore, our theory also covers cases like Gaussian mixture models, where each element of the mixture fulfills one of our precision matrix conditions individually.
Nevertheless, it would be of great interest to establish rank bounds also for more general classes of probability distributions (e.g.\ distributions of the exponential family).

\section*{Acknowledgments}
The first and third author gratefully acknowledge support by the DFG priority programme 1648 (SPPEXA) under Grant No.\ GR-3179/4-1 and GR-3179/4-2.
The first author gratefully acknowledges support by the Leverhulme Trust through project Grant No.\ RPG-2017-203 and by a scholarship of the Education Fund of the RWTH Aachen University, supported by a donation from TÜV Rheinland Stiftung.

\bibliographystyle{plain}
\bibliography{paper} 

\begin{thebibliography}{10}

\bibitem{aune2013}
E.~Aune, J.~Eidsvik, and Y.~Pokern.
\newblock Iterative numerical methods for sampling from high dimensional
  {G}aussian distributions.
\newblock {\em Stat. Comput.}, 23(4):501--521, 2013.

\bibitem{ballani-grasedyck2015}
J.~Ballani and L.~Grasedyck.
\newblock Hierarchical tensor approximation of output quantities of
  parameter-dependent {PDEs}.
\newblock {\em SIAM/ASA J. Uncertain. Q.}, 3(1):852--872, 2015.

\bibitem{bardsley2012}
J.~M. Bardsley.
\newblock {MCMC}-based image reconstruction with uncertainty quantification.
\newblock {\em SIAM J. Sci. Comput.}, 34(3):A1316--A1332, 2012.

\bibitem{bigoni2016spectral}
D.~Bigoni, A.~P. Engsig-Karup, and Y.~M. Marzouk.
\newblock Spectral tensor-train decomposition.
\newblock {\em SIAM J. Sci. Comput.}, 38(4):A2405--A2439, 2016.

\bibitem{bungartzgriebel2004}
H.-J. Bungartz and M.~Griebel.
\newblock Sparse grids.
\newblock {\em Acta Numer.}, 13:147--269, 2004.

\bibitem{burden2010numerical}
R.~L. Burden and J.~D. Faires.
\newblock {\em Numerical Analysis}.
\newblock Brooks/Cole, Boston, 9th edition, 2011.

\bibitem{chiani2002improved}
M.~Chiani and D.~Dardari.
\newblock Improved exponential bounds and approximation for the {Q}-function
  with application to average error probability computation.
\newblock In {\em Global Telecommunications Conference, 2002. GLOBECOM'02.
  IEEE}, volume~2, pages 1399--1402. IEEE, 2002.

\bibitem{cui-dolgov2020}
T.~Cui and S.~Dolgov.
\newblock Deep composition of tensor trains using squared inverse {Rosenblatt}
  transports.
\newblock arXiv Preprint arXiv:2007.06968, 2020.

\bibitem{dektor-dyntensor-2020}
A.~Dektor and D.~Venturi.
\newblock Dynamic tensor approximation of high-dimensional nonlinear {PDE}s.
\newblock {arXiv} Preprint arXiv:2007.09538, 2020.

\bibitem{dolgov2018approximation}
S.~Dolgov, K.~Anaya-Izquierdo, C.~Fox, and R.~Scheichl.
\newblock Approximation and sampling of multivariate probability distributions
  in the tensor train decomposition.
\newblock {\em Stat. Comput.}, 30:603--625, 2020.

\bibitem{dk-qtt-tucker-2013}
S.~Dolgov and B.~Khoromskij.
\newblock Two-level {QTT-Tucker} format for optimized tensor calculus.
\newblock {\em SIAM J. Matrix Anal. Appl.}, 34(2):593--623, 2013.

\bibitem{dolgov2015polynomial}
S.~Dolgov, B.~N. Khoromskij, A.~Litvinenko, and H.~G. Matthies.
\newblock Polynomial chaos expansion of random coefficients and the solution of
  stochastic partial differential equations in the tensor train format.
\newblock {\em SIAM/ASA J. Uncertain. Q.}, 3(1):1109--1135, 2015.

\bibitem{dolgov2019hybrid}
S.~Dolgov and R.~Scheichl.
\newblock A hybrid alternating least squares--{TT}-cross algorithm for
  parametric {PDE}s.
\newblock {\em SIAM/ASA J. Uncertain. Q.}, 7(1):260--291, 2019.

\bibitem{d-tamen-2018}
S.~V. Dolgov.
\newblock A tensor decomposition algorithm for large {ODE}s with conservation
  laws.
\newblock {\em Comput. Meth. Appl. Math.}, 19:23--38, 2019.

\bibitem{DKhOs-parabolic1-2012}
S.~V. Dolgov, B.~N. Khoromskij, and I.~V. Oseledets.
\newblock Fast solution of multi-dimensional parabolic problems in the tensor
  train/quantized tensor train--format with initial application to the
  {Fokker}-{Planck} equation.
\newblock {\em SIAM J. Sci. Comput.}, 34(6):A3016--A3038, 2012.

\bibitem{Eigel-bayes-2018}
M.~Eigel, M.~Marschall, and R.~Schneider.
\newblock Sampling-free {B}ayesian inversion with adaptive hierarchical tensor
  representations.
\newblock {\em Inverse Prob.}, 34(3):035010, 2018.

\bibitem{eigel2017adaptive}
M.~Eigel, M.~Pfeffer, and R.~Schneider.
\newblock Adaptive stochastic {G}alerkin {FEM} with hierarchical tensor
  representations.
\newblock {\em Numer. Math.}, 136(3):765--803, 2017.

\bibitem{feng2007rank}
X.~Feng and Z.~Zhang.
\newblock The rank of a random matrix.
\newblock {\em Appl. Math. Comput.}, 185(1):689--694, 2007.

\bibitem{Fiedler1993StructureRO}
M.~Fiedler.
\newblock Structure ranks of matrices.
\newblock {\em Linear Algebra Appl.}, 179:119--127, 1993.

\bibitem{Sueli-greedy_FP-2012}
L.~E. Figueroa and E.~S{\"u}li.
\newblock Greedy approximation of high-dimensional {O}rnstein--{U}hlenbeck
  operators.
\newblock {\em Found. Comput. Math.}, 12(5):573--623, 2012.

\bibitem{fox-norton2016}
C.~Fox and R.~A. Norton.
\newblock Fast sampling in a linear-{G}aussian inverse problem.
\newblock {\em SIAM/ASA J. Uncertain. Q.}, 4(1):1191--1218, 2016.

\bibitem{fox-parker2017}
C.~Fox and A.~Parker.
\newblock Accelerated {G}ibbs sampling of normal distributions using matrix
  splittings and polynomials.
\newblock {\em Bernoulli}, 23(4B):3711--3743, 2017.

\bibitem{gorodetsky2019continuous}
A.~Gorodetsky, S.~Karaman, and Y.~Marzouk.
\newblock A continuous analogue of the tensor-train decomposition.
\newblock {\em Comput. Methods Appl. Mech. Eng.}, 347:59--84, 2019.

\bibitem{grasedyck2010hierarchical}
L.~Grasedyck.
\newblock Hierarchical singular value decomposition of tensors.
\newblock {\em SIAM J. Matrix Anal. Appl.}, 31(4):2029--2054, 2010.

\bibitem{grasedyck2010polynomial}
L.~Grasedyck.
\newblock Polynomial approximation in hierarchical {T}ucker format by
  vector-tensorization.
\newblock Technical Report 308, Institut f{\"u}r Geometrie und Praktische
  Mathematik, RWTH Aachen, 2010.

\bibitem{grasedyck2013literature}
L.~Grasedyck, D.~Kressner, and C.~Tobler.
\newblock A literature survey of low-rank tensor approximation techniques.
\newblock {\em GAMM-Mitteilungen}, 36(1):53--78, 2013.

\bibitem{hackbuschtensor}
W.~Hackbusch.
\newblock {\em Tensor Spaces and Numerical Tensor Calculus}.
\newblock Springer, Berlin, 2012.

\bibitem{schwab2020bayes}
L.~Herrmann, C.~Schwab, and J.~Zech.
\newblock Deep neural network expression of posterior expectations in
  {B}ayesian {PDE} inversion.
\newblock {\em Inverse Prob.}, published online 14th August 2020.

\bibitem{J70}
A.~H. Jazwinski.
\newblock {\em {Stochastic Processes and Filtering Theory}}.
\newblock Academic Press, New York, 1970.

\bibitem{kva-anidiff-2013}
V.~Kazeev, O.~Reichmann, and C.~Schwab.
\newblock Low-rank tensor structure of linear diffusion operators in the {TT}
  and {QTT} formats.
\newblock {\em Linear Algebra Appl.}, 438(11):4204--4221, 2013.

\bibitem{khoromskij2015tensor}
B.~N. Khoromskij.
\newblock Tensor numerical methods for multidimensional {PDE}s: theoretical
  analysis and initial applications.
\newblock {\em ESAIM: Proceedings and Surveys}, 48:1--28, 2015.

\bibitem{khoromskij2010dmrg}
B.~N. Khoromskij and I.~V. Oseledets.
\newblock {DMRG}+{QTT} approach to computation of the ground state for the
  molecular {S}chr{\"o}dinger operator.
\newblock Technical Report~69, MPI MIS Leipzig, 2010.

\bibitem{khoromskij2011tensor}
B.~N. Khoromskij and C.~Schwab.
\newblock Tensor-structured {G}alerkin approximation of parametric and
  stochastic elliptic {PDE}s.
\newblock {\em SIAM J. Sci. Comput.}, 33(1):364--385, 2011.

\bibitem{LasotaMackey}
A.~Lasota and M.~C. Mackey.
\newblock {\em Chaos, Fractals, and Noise}.
\newblock Springer, New York, 2nd edition, 1994.

\bibitem{liu2004}
J.~S. Liu.
\newblock {\em Monte Carlo Strategies in Scientific Computing}.
\newblock Springer, New York, 2004.

\bibitem{lordetal2014}
G.~J. Lord, C.~E. Powell, and T.~Shardlow.
\newblock {\em An Introduction to Computational Stochastic {PDE}s}.
\newblock Cambridge University Press, Cambridge, 2014.

\bibitem{morzfeld2017localization}
M.~Morzfeld, X.~T. Tong, and Y.~M. Marzouk.
\newblock Localization for {MCMC}: sampling high-dimensional posterior
  distributions with local structure.
\newblock {\em J. Comput. Phys.}, 380:1--28, 2019.

\bibitem{oseledets2011tensor}
I.~V. Oseledets.
\newblock Tensor-train decomposition.
\newblock {\em SIAM J. Sci. Comput.}, 33(5):2295--2317, 2011.

\bibitem{oseledets2013constructive}
I.~V. Oseledets.
\newblock Constructive representation of functions in low-rank tensor formats.
\newblock {\em Constr. Approx.}, 37(1):1--18, 2013.

\bibitem{oseledets2018ttpy}
I.~V. Oseledets et~al.
\newblock ttpy {-} a {P}ython implementation of the {TT}-toolbox.
\newblock \url{https://github.com/oseledets/ttpy}, 2018.

\bibitem{oseledets2009breaking}
I.~V. Oseledets and E.~E. Tyrtyshnikov.
\newblock Breaking the curse of dimensionality, or how to use {SVD} in many
  dimensions.
\newblock {\em SIAM J. Sci. Comput.}, 31(5):3744--3759, 2009.

\bibitem{oseledets2010tt}
I.~V. Oseledets and E.~E. Tyrtyshnikov.
\newblock {TT}-cross approximation for multidimensional arrays.
\newblock {\em Linear Algebra Appl.}, 432(1):70--88, 2010.

\bibitem{ro-crossconv-2015}
M.~V. Rakhuba and I.~V. Oseledets.
\newblock Fast multidimensional convolution in low-rank tensor formats via
  cross approximation.
\newblock {\em SIAM J. Sci. Comput.}, 37(2):A565--A582, 2015.

\bibitem{rue2001}
H.~Rue.
\newblock Fast sampling of {G}aussian {M}arkov random fields.
\newblock {\em J. R. Stat. Soc. Series B}, 63(2):325--338, 2001.

\bibitem{schollwock2005density}
U.~Schollw{\"o}ck.
\newblock The density-matrix renormalization group.
\newblock {\em Rev. Mod. Phys.}, 77(1):259, 2005.

\bibitem{vsimvsa1992bestl}
J.~{\v{S}}im{\v{s}}a.
\newblock The best ${L}^2$-approximation by finite sums of functions with
  separable variables.
\newblock {\em Aequationes Mathematicae}, 43(2-3):248--263, 1992.

\bibitem{stuart2010inverse}
A.~M. Stuart.
\newblock Inverse problems: a {B}ayesian perspective.
\newblock {\em Acta Numer.}, 19:451--559, 2010.

\bibitem{white1992}
S.~R. White.
\newblock Density matrix formulation for quantum renormalization groups.
\newblock {\em Phys. Rev. Lett.}, 69:2683, 1992.

\end{thebibliography}

\end{document}